\documentclass[11pt]{amsart}

\usepackage[T1]{fontenc}
\usepackage[utf8]{inputenc}
\usepackage{lmodern}
\usepackage{amsmath,amssymb,amsthm,mathtools}
\usepackage{mathrsfs}
\usepackage{bm}
\usepackage{microtype}
\usepackage[hidelinks]{hyperref}
\usepackage[nameinlink,noabbrev]{cleveref}
\usepackage[backend=biber,style=numeric,sorting=nyt]{biblatex}
\numberwithin{equation}{section}

\newtheorem{theorem}{Theorem}[section]
\newtheorem{proposition}[theorem]{Proposition}
\newtheorem{lemma}[theorem]{Lemma}
\newtheorem{corollary}[theorem]{Corollary}
\theoremstyle{definition}

\title[Hessian cancellation and quartic nondegeneracy]{Exact Hessian Cancellation and Quartic Nondegeneracy at the Round Circle}
\author{Aya Ishizeki}
\thanks{Corresponding author: Aya Ishizeki. Email: \href{mailto:aishizek@rimath.saitama-u.ac.jp}{aishizek@rimath.saitama-u.ac.jp}. ORCID: \href{https://orcid.org/0009-0008-7250-575X}{0009-0008-7250-575X}}
\address{Department of Mathematics, Faculty of Science, Saitama University, 255 Shimo-Okubo, Sakura-ku, Saitama City, Saitama 338-8570, Japan}
\email{aishizek@rimath.saitama-u.ac.jp}
\subjclass[2020]{Primary 49Q10; Secondary 53A04, 57K10}
\keywords{M\"obius energy, knot energy, second variation, fourth variation, fractional Laplacian, Fourier analysis}

\begin{document}

\begin{abstract}
We study the decomposed M\"obius energies \(E_1\) and \(E_2\) at the round circle and determine their complete normal Hessians:
\[
H_1(C)=2\pi\bigl(|D_s|^3-|D_s|\bigr)I_2,
\qquad
H_2(C)=-\frac{4\pi}{3}\bigl(|D_s|^3-|D_s|\bigr)I_2.
\]
Thus
\[
H_1(C)=3H_E(C),
\qquad
H_2(C)=-2H_E(C),
\]
where \(H_E(C)\) is the Hessian of the full M\"obius energy. This exact proportionality singles out the M\"obius-invariant combination
\[
\mathcal F=2E_1+3E_2+2\pi^2.
\]
We prove that \(\mathcal F(C)=0\) and that its first three derivatives vanish on the full space of parametrized variations.

For scalar binormal variations, the fourth variation has a Fourier tensor with complete \(3+1\) resonance cancellation and finite \(2+2\) fixed-sum blocks. Their positivity follows from an explicit weighted-difference tridiagonalization and a closed determinant formula. For arbitrary normal variations in \(\mathbb R^3\), M\"obius invariance in \(\mathbb R^4\) reduces the radial component to a second binormal direction. The resulting two-component quartic form splits into trace, symmetric-traceless, and antisymmetric blocks, which are positive on the active coordinates. Consequently,
\[
D^4\mathcal F(C)[u,u,u,u]\ge0
\]
for every smooth real normal field \(u\), with equality exactly on the normal tangent space of the M\"obius family of round circles. This yields a fourth-order nondegeneracy phenomenon that is not determined by the total-energy Hessian alone.
\end{abstract}

\maketitle

\section{Introduction}

The M\"obius energy is a conformally invariant knot energy introduced to measure the geometric complexity of closed curves \cite{OHara1991,FreedmanHeWang1994}. Its variational structure has been studied extensively, both from the point of view of critical-point regularity and in connection with geometric gradient flows.

For a closed curve \(f\), the M\"obius energy admits the M\"obius-invariant decomposition
\[
E(f)=E_1(f)+E_2(f)+4
\]
introduced by Ishizeki and Nagasawa \cite{IshizekiNagasawa2014}. The two components \(E_1\) and \(E_2\) possess separate variational structures and are themselves M\"obius invariant. Their first and second variation formulae, \(L^2\)-gradient representations, and principal third-order parts are known. In particular, the leading terms in the component Hessians are
\[
2\pi(-\Delta_s)^{3/2}
\qquad\text{and}\qquad
-\frac{4\pi}{3}(-\Delta_s)^{3/2},
\]
respectively.

The purpose of the present paper is to investigate what additional structure becomes visible at the round circle when the two decomposed energies are treated separately.

Let
\[
C(s)=(\cos s,\sin s,0),
\qquad
s\in\mathbb T=\mathbb R/2\pi\mathbb Z,
\]
and let
\[
b=(0,0,1).
\]
We write a normal variation along \(C\) as
\[
u(s)=a(s)C(s)+c(s)b.
\]

The high symmetry of the circle allows the full lower-order part of the Hessian to be computed, not only its principal symbol. We obtain
\[
H_1(C)=2\pi\bigl(|D_s|^3-|D_s|\bigr)I_2,
\]
\[
H_2(C)=-\frac{4\pi}{3}\bigl(|D_s|^3-|D_s|\bigr)I_2.
\]
Thus the complete Hessians satisfy
\[
H_1(C)=3H_E(C),
\qquad
H_2(C)=-2H_E(C).
\]

This is not merely a relation between principal symbols: the complete lower-order corrections occur in exactly the same proportion.

This exact cancellation naturally singles out
\begin{equation}\label{eq:F-def}
\mathcal F=2E_1+3E_2+2\pi^2.
\end{equation}
The constant is chosen so that \(\mathcal F(C)=0\). Since the circle is stationary for both decomposed energies, the first variation vanishes, while the Hessian identities yield
\[
D^2\mathcal F(C)=0.
\]

A further cancellation occurs at third order. Using a M\"obius transformation that fixes the circle pointwise and acts by \(-I\) on its normal bundle, together with reparametrization invariance, we show that
\[
D^3\mathcal F(C)=0
\]
on the full space of parametrized variations. Thus
\[
D^j\mathcal F(C)=0,
\qquad
j=0,1,2,3.
\]
The fourth derivative is therefore the first potentially nontrivial jet of \(\mathcal F\) at \(C\).

The main result of this paper is that this fourth derivative is nonnegative on the full normal space and strictly positive away from the infinitesimal M\"obius directions of round circles.

\subsection{Related work}

The second variation of the full M\"obius energy was computed by He \cite{He2000}, who identified its principal third-order part as a multiple of the fractional Laplacian
\[
(-\Delta_s)^{3/2}.
\]
This order-three structure has subsequently played an important role in the analysis of the M\"obius gradient flow and in the regularity theory of critical knots; see, among others, the work of Blatt, Blatt--Reiter--Schikorra, and Blatt--Vorderobermeier \cite{Blatt2012,BlattReiterSchikorra2016,BlattVorderobermeier2019}. Reiter and Schumacher later exploited the same analytic scale in the construction of Sobolev gradients for numerical optimization of the M\"obius energy \cite{ReiterSchumacher2021}.

The results cited above provide the general variational framework and the third-order principal structure. Here we compute the complete normal Fourier multipliers of both decomposed energies at the round circle and use their exact proportionality to study a fourth-order degeneracy.

A complementary line of development began with the M\"obius-invariant decomposition of Ishizeki and Nagasawa. They derived variational formulae for \(E_1\) and \(E_2\) separately and later obtained \(L^2\)-representations of their gradients \cite{IshizekiNagasawa2015,IshizekiNagasawa2016}. Comparison estimates between the decomposed energies and the total M\"obius energy were also established in subsequent work \cite{IshizekiNagasawa2021}. Gunji subsequently obtained \(L^2\)-representations of the component Hessians, identified their principal third-order operators, and proved \L{}ojasiewicz--Simon gradient inequalities for the individual components \cite{Gunji2019}.

Fourier analysis of the Hessian at the round circle was considered more directly by Biesenbach in her 2021 dissertation \cite{Biesenbach2021}. Using an explicit second-variation representation and an ambient-coordinate Fourier expansion, she identified a nontrivial subset of the kernel of the Hessian of the full M\"obius energy and developed a possible route toward a complete classification. A complete kernel classification was not obtained in that dissertation.

In the present work, we instead use the moving normal frame of the circle and compute the two decomposed Hessians separately. Theorem~\ref{thm:complete-hessians} identifies their complete normal Fourier multipliers and their common kernel, sharpening the principal-order information at the round circle. Their exact proportionality is not determined by the total-energy Hessian alone and is the starting point of the higher-order phenomenon studied here: it yields the distinguished combination \(\mathcal F\) in \eqref{eq:F-def}, whose derivatives vanish through third order.

To the best of our knowledge, the resulting third-order degeneracy, the quartic Fourier tensor, and the full-normal quartic nondegeneracy proved below have not previously been established for the decomposed M\"obius energies.

\subsection{Main results}

We use
\[
|D_s|=(-\partial_s^2)^{1/2},
\qquad
|D_s|e^{iks}=|k|e^{iks}.
\]

Define
\[
\mathcal K
=
\left\{
aC+cb:\;
a,c\in\operatorname{span}_{\mathbb R}\{1,\cos s,\sin s\}
\right\}.
\]
This is the normal tangent space at \(C\) to the M\"obius family of unparametrized round circles.

We identify a normal field \(u=aC+cb\) with its coefficient pair \((a,c)\) in the moving normal frame \((C,b)\). The operators below act on this pair, with the unnormalized \(L^2\) pairing
\[
\langle(a,c),(\widetilde a,\widetilde c)\rangle_{L^2}
=
\int_0^{2\pi}(a\widetilde a+c\widetilde c)\,ds.
\]

\begin{theorem}[Complete decomposed Hessians at the round circle]\label{thm:complete-hessians}

For every smooth normal field \(u=aC+cb\),
\begin{align}
H_1(C)&=2\pi\bigl(|D_s|^3-|D_s|\bigr)I_2,\label{eq:complete-hessians}\\
H_2(C)&=-\frac{4\pi}{3}\bigl(|D_s|^3-|D_s|\bigr)I_2.\notag
\end{align}
Consequently,
\[
H_E(C)=\frac{2\pi}{3}\bigl(|D_s|^3-|D_s|\bigr)I_2.
\]
In particular,
\[
H_1(C)=3H_E(C),
\qquad
H_2(C)=-2H_E(C).
\]
Moreover,
\[
\ker H_1(C)=\ker H_2(C)=\ker H_E(C)=\mathcal K
\]
on the normal space.

\end{theorem}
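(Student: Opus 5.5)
The plan is to compute the second variations of \(E_1\) and \(E_2\) directly at \(C\) in the moving frame \((C,b)\), and to exploit rotational symmetry so that both Hessians become Fourier multipliers. Recall the standard expressions
\[
E_1(f)=\iint\frac{|\tau(s_1)\wedge(f(s_1)-f(s_2))|^2}{|\Delta f|^4}\,ds_1ds_2,\qquad
E_2(f)=\iint\frac{2\det\bigl(\tau(s_1),\Delta f,\ldots\bigr)}{|\Delta f|^4}\ldots,
\]
in the form given by Ishizeki--Nagasawa, with \(\Delta f=f(s_1)-f(s_2)\). Let \(f_\varepsilon=C+\varepsilon u+\varepsilon^2 w\), with \(w\) chosen so that the parametrization stays by arclength to second order; equivalently, use the invariant formulas with \(ds\) replaced by \(|f'|ds\). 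Expand each integrand to order \(\varepsilon^2\).

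The rotation \(R_\theta\) about the \(b\)-axis, combined with the shift \(s\mapsto s+\theta\), preserves \(C\), \(b\) and the frame. Hence \(H_i(C)\) commutes with translations on \(\mathbb T\) and is diagonal in \(e^{iks}\) acting on \((a,c)\). The reflection \(b\mapsto -b\) decouples the \(a\)- and \(c\)-components. Complex conjugation together with \(s\mapsto -s\) makes each symbol real and even in \(k\). So for each \(i\) it suffices to find two scalar multipliers \(m_i^{a}(k)\) and \(m_i^{c}(k)\).

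To find them, insert \(a=\cos ks\) and separately \(c=\cos ks\) into the expanded quadratic form. Set \(\sigma=s_1-s_2\) and integrate over \(s_1+s_2\). This reduces each multiplier to a one-dimensional integral of the form
\[
\int_0^{2\pi}\frac{P_k(\sigma)}{(2\sin(\sigma/2))^{4}}\,d\sigma ,
\]
where \(P_k\) is a trigonometric polynomial in \(\sigma\) vanishing to order four at \(\sigma=0\); this vanishing should be checked, since it is what makes the integral finite. These integrals are evaluated with the classical identities
\[
\int_0^{2\pi}\frac{1-\cos k\sigma}{4\sin^2(\sigma/2)}\,d\sigma=\pi|k|
\]
and its fourth-order analogue, which produces \(\tfrac{\pi}{3}(|k|^3-|k|)\)-type terms. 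After collecting terms, the claim is that
\[
m_1^a=m_1^c=2\pi(|k|^3-|k|),\qquad m_2^a=m_2^c=-\tfrac{4\pi}{3}(|k|^3-|k|).
\]
As a consistency check, the leading coefficients must match the known principal symbols \(2\pi\) and \(-\tfrac{4\pi}{3}\). The vanishing at \(|k|=0,1\) is forced independently, because \(\mathcal K\) consists of Möbius directions (translations, dilations and rotations/inversions moving \(C\) to other round circles), and both \(E_i\) are Möbius invariant with \(C\) critical. Once the symbol is known to be a cubic polynomial in \(|k|\), these constraints, together with the principal coefficient and the absence of a quadratic term, pin it down. I would try to prove that structural fact from the integral form directly, which would shortcut much of the bookkeeping.

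The formula for \(H_E\) then follows from \(E=E_1+E_2+4\), since \(2\pi-\tfrac{4\pi}{3}=\tfrac{2\pi}{3}\), and the proportionalities are immediate. For the kernel, \(|k|^3-|k|=0\) exactly when \(k\in\{0,\pm1\}\), so the common kernel consists of the pairs \((a,c)\) with both entries in \(\operatorname{span}\{1,\cos s,\sin s\}\), which is \(\mathcal K\).

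The main obstacle is the second-order expansion of the \(E_2\) integrand in the moving frame. Normal variations change arclength, since \(a\) alters \(|f'|\), and this produces tangential reparametrization terms and cross terms between the \(C\)-component and the derivative of the frame. Those terms generate the lower-order \(-|k|\) correction and must cancel exactly for the \(a\)- and \(c\)-symbols to coincide. To control this bookkeeping, I would first verify the \(c\)-symbol, where the variation is orthogonal to the plane and the algebra is simplest. I would then obtain the \(a\)-symbol using Möbius invariance in \(\mathbb R^4\), rotating the radial direction into a fourth-coordinate binormal, which makes \(a\) and \(c\) play symmetric roles.
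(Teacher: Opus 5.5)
This is essentially the paper's proof: the paper computes the binormal multipliers directly from \(f_\varepsilon=C+\varepsilon cb\), using the Fej\'er identity \(A_k=\pi|k|\) for \(E_1\) and a second-difference recurrence giving \(B_k=-\frac{\pi}{6}k(k^2-1)\) for \(E_2\); it then transfers these to the radial component via stationarity and the M\"obius map \(T(x)=b+2(x-b)/|x-b|^2\), which fixes \(C\) with \(DT_C[C]=b\) already in \(\mathbb R^3\) (so your passage to \(\mathbb R^4\) works but is unnecessary), removes cross terms by reflection, and reads the kernel off \(|k|^3-|k|\). Your cubic-polynomial shortcut would not save work, since the absence of a \(|k|^2\) term is exactly the information to be computed; and you should correct the \(E_1\) density you display, \(|\tau\wedge\Delta f|^2/|\Delta f|^4\), which is a tangent-point-type density (it integrates to \(\pi^2\) on \(C\), not \(E_1(C)=2\pi^2\)), whereas the right one is \(\bigl(|f'(s)||f'(t)|-f'(s)\cdot f'(t)\bigr)/|f(s)-f(t)|^2\), giving \(D^2E_1(C)[cb,cb]=\iint\frac{(p-q)^2-z^2}{D}\,ds\,dt\).
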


\begin{theorem}[Vanishing through third order]\label{thm:lower-jets}

Let
\[
\mathcal F=2E_1+3E_2+2\pi^2.
\]
Then
\[
\mathcal F(C)=0,
\]
and
\[
D\mathcal F(C)=D^2\mathcal F(C)=D^3\mathcal F(C)=0
\]
on the full space of smooth parametrized variations.

\end{theorem}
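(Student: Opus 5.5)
We prove the four vanishing statements in order of increasing difficulty.

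\emph{Order zero.} We evaluate \(E_1(C)\) and \(E_2(C)\) directly from the Ishizeki--Nagasawa definitions. For the unit circle, chord length is \(|C(s_1)-C(s_2)|=2|\sin\frac{s_1-s_2}{2}|\) and the angle between tangents is \(s_1-s_2\), so both integrands reduce to explicit functions of \(\theta=s_1-s_2\). The known values are \(E(C)=4\), so \(E_1(C)+E_2(C)=0\), and \(E_2(C)=-2\pi^2\) (equivalently \(E_1(C)=2\pi^2\)). Then \(\mathcal F(C)=4\pi^2-6\pi^2+2\pi^2=0\). We will verify this through the one-variable integral of the \(E_2\) density, which equals \(2\pi\int_0^{2\pi}(\text{density})\,d\theta\).

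\emph{First and second order.} The circle is critical for \(E_1\) and \(E_2\) separately; by rotational symmetry their \(L^2\)-gradients are constant multiples of the curvature vector, and these vanish by M\"obius invariance under dilations, or by the known first-variation formulae. Hence \(D\mathcal F(C)=0\). For the second derivative, the energies are reparametrization invariant. At a critical point, the full Hessian on parametrized variations therefore factors through the normal projection: a tangential component \(\phi C'\) is generated by reparametrizations, and since \(DE_i(C)=0\) it contributes nothing to \(D^2E_i(C)\). \Cref{thm:complete-hessians} then gives \(2H_1+3H_2=(6-6)H_E=0\), so \(D^2\mathcal F(C)=0\).

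\emph{Third order: the main obstacle.} Here tangential components no longer decouple automatically, and we handle this in two steps.

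First we use parity. Let \(\iota\) be the inversion in the sphere of radius one centred at the origin of \(\mathbb R^3\). It fixes \(C\) pointwise, is conformal, and its differential at points of \(C\) is the reflection acting as \(-1\) on \(C(s)\) and \(+1\) on the plane spanned by \(C'(s)\) and \(b\). Composing with the reflection \(z\mapsto -z\) gives a M\"obius map \(\sigma\) with \(d\sigma=\mathrm{id}\) on \(C'\) and \(-\mathrm{id}\) on the normal bundle. Since \(\mathcal F\circ\sigma=\mathcal F\), we have \(\mathcal F(C+\varepsilon u)=\mathcal F(\sigma(C+\varepsilon u))\) and \(\sigma(C+\varepsilon u)=C-\varepsilon u+O(\varepsilon^2)\) for normal \(u\). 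The cubic coefficient is not simply negated because of the \(O(\varepsilon^2)\) correction. We therefore replace straight-line variations by the family \(\varepsilon\mapsto\gamma_\varepsilon\) with \(\sigma\circ\gamma_\varepsilon=\gamma_{-\varepsilon}\). Such a family can be built by averaging: set \(\gamma_\varepsilon=\tfrac12(C+\varepsilon u)+\dots\), or more cleanly use a \(\sigma\)-equivariant normal exponential chart, for instance the conformal normal coordinates in which \(\sigma\) is linear \(-I\) on fibres; these exist by linearizing an involution near its fixed set (Bochner). In that chart \(g(\varepsilon)=\mathcal F(\gamma_\varepsilon)\) is even, so its third derivative vanishes. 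Because the first and second derivatives vanish, the third derivative at \(C\) is chart independent: it transforms tensorially when lower jets vanish. This gives \(D^3\mathcal F(C)[u,u,u]=0\) for normal \(u\), and polarization gives the full symmetric trilinear form on normal fields.

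Second we treat tangential components. Write a general variation as \(w=\phi C'+u\) and realize \(\phi\) through a reparametrization flow, so that \(C+\varepsilon w\) differs from a reparametrization of a normal-type family only at second order. Then \(D^3\mathcal F(C)[w,w,w]\) differs from \(D^3\mathcal F(C)[u',u',u']\), for a corrected normal field \(u'\), by terms involving \(D\mathcal F(C)\) and \(D^2\mathcal F(C)\). Both of these vanish, so the corrections drop out. The delicate point, and the one we expect to take the most care, is making this jet-composition bookkeeping precise: a change of curve \(\gamma_\varepsilon\mapsto\tilde\gamma_\varepsilon\) agreeing to first order alters the third derivative only through \(D\mathcal F\) and \(D^2\mathcal F\) paired with the second- and first-order discrepancies.
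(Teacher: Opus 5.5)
Your proof is correct and follows the paper's route. Your $\sigma$ is exactly the paper's $J(x)=(x_1,x_2,-x_3)/|x|^2$ acting by $-I$ on normals, and your third-order step rests on the same fact the paper uses: $D^3\mathcal F(C)$ transforms tensorially once $D\mathcal F(C)=D^2\mathcal F(C)=0$ on the full parametrized space. Your reparametrized family for tangential components is the integrated form of the paper's identity $D^3G(C)[u,v,\xi\tau]=0$. For $D\mathcal F(C)=0$ you use rotational equivariance plus dilation invariance instead of reflection and the map $T$; this also works, provided you add translation or reflection invariance to remove a possible constant binormal component.

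The equivariant Bochner chart is unnecessary, and your claim that the cubic coefficient ``is not simply negated'' is mistaken. For $\gamma(\varepsilon)=\sigma(C+\varepsilon u)$, the $O(\varepsilon^2)$ correction enters the third derivative only through $3D^2\mathcal F(C)[\gamma'(0),\gamma''(0)]+D\mathcal F(C)[\gamma'''(0)]=0$, which is exactly the chart independence you invoke anyway.
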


\begin{theorem}[Quartic nondegeneracy]\label{thm:quartic}

For every smooth real normal field \(u=aC+cb\) along the unit circle in \(\mathbb R^3\),
\[
D^4\mathcal F(C)[u,u,u,u]\ge0.
\]
Moreover,
\[
D^4\mathcal F(C)[u,u,u,u]=0
\iff
u\in\mathcal K.
\]

Thus the fourth variation is strictly positive on every normal direction outside the infinitesimal M\"obius directions of round circles.

\end{theorem}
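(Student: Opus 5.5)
The proof of Theorem~\ref{thm:quartic} has two stages: first the scalar binormal case \(u=c\,b\), then a reduction of a general normal field \(u=aC+cb\) to a two-component binormal problem via Möbius invariance.

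\emph{Step 1: the quartic tensor for binormal variations.} Insert \(f_\varepsilon=C+\varepsilon c\,b\) into the integral representations of \(E_1\) and \(E_2\) and expand to order \(\varepsilon^4\). Because \(b\) is constant and orthogonal to the plane of \(C\), one has
\[
|f_\varepsilon(s)-f_\varepsilon(t)|^2=|C(s)-C(t)|^2+\varepsilon^2\bigl(c(s)-c(t)\bigr)^2 .
\]
The same structure holds for the derivative terms, so the expansion involves only the difference quotients \((c(s)-c(t))/|C(s)-C(t)|\) and \(c'\).

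Write \(c=\sum_k \hat c_k e^{iks}\). The quartic term then becomes
\[
\sum_{k_1+k_2+k_3+k_4=0}T(k_1,k_2,k_3,k_4)\,\hat c_{k_1}\hat c_{k_2}\hat c_{k_3}\hat c_{k_4},
\]
with an explicit symbol \(T\) obtained from integrals of the form \(\int_{\mathbb T}\frac{e^{ikt}-1}{|2\sin(t/2)|^{2}}\cdots\,dt\).

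\emph{Step 2: cancellations and positivity of the binormal form.} Show that the combination \(2E_1+3E_2\) kills every \(3+1\) resonant interaction, that is, frequency configurations with one frequency equal to minus the sum of the other three. Once these are removed, the form decomposes into \(2+2\) blocks indexed by a fixed sum \(m=k_1+k_2\). Each block is a finite Hermitian quadratic form in the variables \(\hat c_{k}\hat c_{m-k}\); finiteness comes from the symbol vanishing for large \(|k|\) relative to \(m\), or from the form being banded.

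Next, change variables to weighted differences \(w_k=\alpha_k\hat c_k\hat c_{m-k}-\beta_k\hat c_{k+1}\hat c_{m-k-1}\). In these variables each block becomes tridiagonal, and its leading minors satisfy a three-term recursion. Solve that recursion in closed form to obtain a determinant formula, and read off positive definiteness on the coordinates not involving the frequencies \(|k|\le1\).

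\emph{Step 3: reduction of the radial part.} Embed \(\mathbb R^3\subset\mathbb R^4\) and use a Möbius transformation of \(\mathbb R^4\) that fixes \(C\) pointwise. Combined with reparametrization and the invariance of \(\mathcal F\), this converts the radial direction \(C\) into a second constant binormal \(e_4\), modulo terms that Theorem~\ref{thm:lower-jets} shows do not affect the fourth jet. A general \(u\) thus corresponds to \(c_1e_3+c_2e_4\), where \(c_1,c_2\) are related to \((c,a)\) by an explicit invertible map that preserves frequencies up to lower order.

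The distance identity now holds with \((c(s)-c(t))^2\) replaced by \(|\mathbf c(s)-\mathbf c(t)|^2\) for \(\mathbf c=(c_1,c_2)\). Consequently the quartic form is built from the same symbol \(T\), contracted through the products \(\mathbf c_i\cdot\mathbf c_j\).

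\emph{Step 4: block decomposition and equality case.} Decompose the tensor products \(\hat{\mathbf c}_{k}\otimes\hat{\mathbf c}_{m-k}\) into trace, symmetric-traceless and antisymmetric parts. The trace and symmetric-traceless blocks reduce to the scalar blocks of Step 2 with modified weights. The antisymmetric block gives a new tridiagonal form, and its positivity is checked with the same determinant method.

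For the equality case, the form vanishes only if all active coordinates vanish. That forces \(\hat{\mathbf c}_k=0\) for \(|k|\ge2\), which means \(u\in\mathcal K\). The converse holds because \(\mathcal K\) is tangent to a family on which \(\mathcal F\equiv0\).

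The main obstacle I expect is Step 2: producing the closed determinant formula, and showing that the weighted-difference change of variables genuinely tridiagonalizes every fixed-sum block. Extending the same positivity to the antisymmetric block in Step 4 is the corresponding difficulty in the two-component case.
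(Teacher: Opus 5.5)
Your overall architecture matches the paper's: scalar Fourier tensor, $3+1$ cancellation, fixed-sum $2+2$ blocks, reduction in $\mathbb R^4$ by a M\"obius map sending $C$ to $e_4$, trace/symmetric-traceless/antisymmetric splitting, and the central block $S=2m$ for equality. However, Steps~3--4 contain a genuine gap: the two-component quartic form is \emph{not} determined by the scalar symbol. Restricting to $v=c\,\xi$ with $\xi$ fixed only sees the sum of the three pairing channels. For example, the $D^{-2}$ term of the vector density contains $4(p\cdot q)|z|^2+6(p\cdot z)(q\cdot z)$, which collapses to $10\,pq\,z^2$ for scalars, and no scalar computation tells you how this splits. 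This has two consequences.

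\emph{The $3+1$ cancellation.} Each of the three pairing coefficients of a $3+1$ configuration must be shown to vanish separately, since Step~2 only gives their sum.

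\emph{The trace and symmetric-traceless blocks.} These do not reduce to the scalar blocks. With $W_p=\mu_pz_pz_{S-p}^T$, the trace matrix $R_{pq}=g(r)[10S^2-5Sr-20Ss+2r^2+20s^2-8]$ and the traceless matrix $U_{pq}=g(r)[10S^2-15Sr+6r^2-4]$ satisfy $R+U=4M^{(S)}$. So Step~2 proves only that the sum is positive. Complex polarizations separate the two blocks. Taking $z_k=\zeta_k(1,i)$ gives a real field with $\operatorname{tr}W_p=0$ and $W_p^-=0$. Its quartic form is a positive multiple of $\sum_S\sum_{p,q}U_{pq}\mu_p\mu_q\zeta_p\zeta_{S-p}\overline{\zeta_q\zeta_{S-q}}$, about which $M^{(S)}>0$ says nothing.

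You therefore need $U>0$ and $R>0$ independently. For $U$, the paper uses $U_{pq}=f(\min(p,q))$ with $f(1)=0$ and $f(k)-f(k-1)=30k(k-1)(S-k)(S-k+1)>0$, so that $x^*Ux=\sum_k\bigl(f(k)-f(k-1)\bigr)\bigl|\sum_{p\ge k}x_p\bigr|^2$. For $R$, the paper uses its own three-point congruence, which produces a tridiagonal matrix with negative off-diagonals and positive row sums. The antisymmetric block you flagged as the new difficulty is actually the easy one: $V_{pq}=5g(r)(2S-r)(S-2s)$ is a Green matrix. Its only subtlety is that for even $S$ its central row vanishes, which is harmless because $W^-_{S/2}=0$.

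Two further points need repair.

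\emph{The tridiagonalization.} The mechanism you describe cannot work. For $i\le j$ one has $M^{(S)}_{ij}=g(i)[5S^2-5Si+2i^2-3]+5g(i)(j^2-Sj)$, so the dependence on the larger index spans $\{1,\,j^2-Sj\}$. Removing it requires three-point stencils $q_j=(S-2j+1)e_{j-2}-2(S-2j+2)e_{j-1}+(S-2j+3)e_j$ ($j\ge4$), used as the columns of a congruence $Q^TMQ$. If instead the new variables $w=Py$ are two-point differences of the old ones, then tridiagonality would force $M=P^*T'P$ with $T'$ tridiagonal, i.e.\ $M$ pentadiagonal. But the first row of $M^{(S)}$ never vanishes.

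\emph{Lower jets in $\mathbb R^4$.} Theorem~\ref{thm:lower-jets} concerns curves in $\mathbb R^3$. The chain-rule terms $D^3\mathcal F(C)[v_1,v_1,v_2]$, $D^2\mathcal F(C)[v_2,v_2]$ and $D^2\mathcal F(C)[v_1,v_3]$ involve vectors with $e_4$-components. You must therefore re-prove $D^j\mathcal F_{\mathbb R^4}(C)=0$ for $j\le3$, using reflections in $x_3$ and $x_4$, the map $T_4$ for the radial direction, and $J_4$ with $DJ_{4,C}|_{N_C}=-I$. Once this is done the transfer is exact, $v_1=DT_{4,C}[aC+cb]=ae_4+cb$, with no frequency mixing ``up to lower order.''
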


\subsection{Structure of the proof}

After identifying the vanishing lower-order jets, the proof of quartic nondegeneracy proceeds in three stages.

First, for purely binormal variations \(u=cb\), we derive an explicit physical-space formula for the quartic Taylor coefficient. Its Fourier polarization exhibits a special resonance structure: every \(3+1\) resonance vanishes identically, while the surviving \(2+2\) interactions are organized by a fixed total frequency \(S\). For each \(S\), the scalar quartic form reduces to a finite symmetric matrix \(M^{(S)}\). A weighted three-point difference transformation tridiagonalizes its leading principal blocks and leads to an explicit determinant formula. Sylvester's criterion then yields positivity.

Second, we pass from scalar binormal variations to arbitrary normal variations. We embed
\[
\mathbb R^3\subset\mathbb R^4
\]
and use a M\"obius transformation fixing the circle which sends the radial direction \(C\) to an additional binormal direction \(e_4\).

The vanishing of the lower-order jets is essential at this stage. Since
\[
D^j\mathcal F_{\mathbb R^4}(C)=0,
\qquad
j=1,2,3,
\]
the nonlinear correction terms in the fourth-order chain rule disappear. Thus, at fourth order, M\"obius invariance acts through the linearized transformation, reducing the full normal problem in \(\mathbb R^3\) to a two-component vector-valued binormal problem.

Finally, the resulting fixed-sum matrix variables decompose into
\[
\text{trace}\oplus\text{symmetric-traceless}\oplus\text{antisymmetric}
\]
parts. The corresponding three scalar matrix families are shown to be positive by three different mechanisms: a cumulative-sum factorization, a Green-matrix structure, and a weighted-difference tridiagonalization with positive row sums.

\subsection{Geometric meaning and scope}

Writing \(\delta E=E-4\) and \(\delta E_1=E_1-2\pi^2\), we have
\[
\mathcal F=3\delta E-\delta E_1.
\]
Theorem~\ref{thm:quartic} therefore identifies the first nonzero discrepancy between two M\"obius-invariant energies whose jets agree through order three, and shows that its quartic term has exactly the infinitesimal round-circle directions as its zero set. The result concerns the quartic jet: it does not assert a uniform nonlinear local-minimality or gradient-flow theorem. Section~8 explains the obstruction to quartic coercivity in \(H^{3/2}\) and the additional estimates required for such conclusions.

\section{Preliminaries}

Throughout the paper we work first in the smooth category. Let
\[
\mathbb T=\mathbb R/2\pi\mathbb Z
\]
and let
\[
f:\mathbb T\to\mathbb R^n
\]
be a smooth embedded closed curve with \(f'(s)\neq0\).

For the unit circle we use
\[
C(s)=(\cos s,\sin s,0),
\]
\[
\tau(s)=C'(s)=(-\sin s,\cos s,0),
\qquad
b=(0,0,1).
\]
We use the outward radial vector \(C(s)\), so every normal field is
\[
u=aC+cb.
\]

We write
\[
|D_s|=(-\partial_s^2)^{1/2},
\qquad
|D_s|e^{iks}=|k|e^{iks}.
\]

\subsection{Variational formulas and elementary variations}

Let
\[
d=f(s)-f(t).
\]
Then
\[
E_1(f)
=
\iint_{\mathbb T^2}
\frac{|f'(s)||f'(t)|-f'(s)\cdot f'(t)}
{|f(s)-f(t)|^2}\,ds\,dt,
\]
and
\[
E_2(f)
=
2\iint_{\mathbb T^2}
\left[
\frac{f'(s)\cdot f'(t)}{|d|^2}
-
\frac{(f'(s)\cdot d)(f'(t)\cdot d)}
{|d|^4}
\right]ds\,dt.
\]

For the circle,
\[
E_1(C)=2\pi^2,\qquad
E_2(C)=-2\pi^2,\qquad
E(C)=4.
\]

For later use, we also record the elementary first-variation formulas. For
\[
f_\varepsilon=f+\varepsilon u
\]
and an arc-length parametrized base curve,
\[
\delta |f'|=\tau\cdot u',
\]
\[
\delta(d\ell)=(\tau\cdot u')\,ds,
\]
and
\[
\delta\tau
=
u'-(\tau\cdot u')\tau
=
P_\tau^\perp u'.
\]

\subsection{Invariances, circle symmetries, and stationarity}

Reparametrization invariance gives the following identities. If \(G\) is reparametrization invariant, then
\[
DG(f)[\xi f']=0.
\]
Differentiating,
\[
D^2G(f)[u,\xi f']+DG(f)[\xi u']=0.
\]
Hence at a stationary point \(C\),
\[
D^2G(C)[u,\xi\tau]=0.
\]
Differentiating again, if \(DG(C)=D^2G(C)=0\), then
\[
D^3G(C)[u,v,\xi\tau]=0.
\]

For M\"obius invariance, we will repeatedly use the following chain-rule principle. Let \(G\) be \(C^m\), M\"obius invariant, and let \(T(C)=C\). If
\[
D^jG(C)=0,\qquad 1\le j\le m-1,
\]
then
\[
D^mG(C)[u_1,\ldots,u_m]
=
D^mG(C)[DT_Cu_1,\ldots,DT_Cu_m].
\]

This follows from the Faà di Bruno formula, since all terms containing \(D^jG(C)\) with \(j<m\) vanish.

The relevant symmetries of the round circle are as follows. Reflection
\[
R(x_1,x_2,x_3)=(x_1,x_2,-x_3)
\]
fixes \(C\), preserves \(C\) and \(\tau\), and reverses \(b\).

The M\"obius map
\[
T(x)=b+\frac{2(x-b)}{|x-b|^2}
\]
satisfies
\[
T(C)=C,
\]
\[
DT_C[C]=b,\qquad
DT_C[b]=C,\qquad
DT_C[\tau]=\tau.
\]

The normal-reversing map
\[
J(x)=\frac{(x_1,x_2,-x_3)}{|x|^2}
\]
fixes \(C\) and satisfies
\[
DJ_C[\tau]=\tau,\qquad
DJ_C[C]=-C,\qquad
DJ_C[b]=-b.
\]

These symmetries imply stationarity. Tangential first variations vanish by reparametrization invariance. Binormal first variations vanish by reflection. Radial first variations vanish by the map \(T\), since
\[
DT_C[C]=b.
\]
Therefore
\[
DE_i(C)=0,\qquad i=1,2,
\]
and consequently
\[
D\mathcal F(C)=0.
\]

\section{Complete Hessians at the Round Circle}

Let
\[
f_\varepsilon(s)=C(s)+\varepsilon c(s)b.
\]
Set
\[
h=s-t,\qquad
D=2-2\cos h,
\]
\[
p=c'(s),\qquad q=c'(t),\qquad z=c(s)-c(t).
\]

\subsection{Binormal second variations}

We have
\[
|f_\varepsilon'(s)|
=
\sqrt{1+\varepsilon^2p^2},
\]
\[
f_\varepsilon'(s)\cdot f_\varepsilon'(t)
=
\cos h+\varepsilon^2pq,
\]
and
\[
|f_\varepsilon(s)-f_\varepsilon(t)|^2
=
D+\varepsilon^2z^2.
\]
Expanding to order \(\varepsilon^2\),
\[
D^2E_1(C)[cb,cb]
=
\iint
\frac{(p-q)^2-z^2}{D}\,ds\,dt.
\]

For the second component, using
\[
f_\varepsilon'(s)\cdot
(f_\varepsilon(s)-f_\varepsilon(t))
=
\sin h+\varepsilon^2pz
\]
and the analogous formula at \(t\), one obtains
\[
D^2E_2(C)[cb,cb]
=
4\iint
\left[
\frac{pq}{D}
-
\frac{\sin h(p+q)z}{D^2}
+
\frac{z^2}{D^2}
\right]ds\,dt.
\]
The three displayed terms are interpreted as one combined integrable expression.

\subsection{Fourier spectra}

For
\[
A_k=
\int_{-\pi}^{\pi}
\frac{1-\cos(kh)}
{2-2\cos h}\,dh,
\]
the Fejér-kernel identity gives
\[
A_k=\pi|k|.
\]

Applying this identity to the first component, for \(c_k(s)=\cos(ks)\) with \(k\ge1\),
\[
D^2E_1(C)[c_kb,c_kb]
=
2\pi^2k(k^2-1),
\]
and since, for \(k\ge1\),
\[
\|c_k\|_{L^2}^2=\pi,
\]
the eigenvalue is
\[
\lambda_k^{(1)}=2\pi|k|(k^2-1).
\]

For the second component, for \(c_k=\cos(ks)\) with \(k\ge1\), define
\[
B_k
=
\int_{-\pi}^{\pi}
\frac{1-\cos(kh)-k^2(1-\cos h)}
{D^2}\,dh.
\]
Then
\[
B_{k+1}-2B_k+B_{k-1}=-\pi k
\qquad(k\ge1),
\qquad
B_0=B_1=0,
\]
so
\[
B_k=-\frac{\pi}{6}k(k^2-1).
\]
Consequently,
\[
D^2E_2(C)[c_kb,c_kb]
=
-\frac{4\pi^2}{3}k(k^2-1),
\]
and therefore
\[
\lambda_k^{(2)}
=
-\frac{4\pi}{3}|k|(k^2-1).
\]

Combining the two spectra, since
\[
E=E_1+E_2+4,
\]
\[
\lambda_k^E
=
\frac{2\pi}{3}|k|(k^2-1).
\]

\subsection{Full normal operators and kernel}

M\"obius invariance and stationarity transfer the binormal Hessians to the radial component, while reflection symmetry eliminates the mixed terms. More precisely,
\[
D^2E_i(C)[aC,aC]
=
D^2E_i(C)[ab,ab].
\]
Reflection gives
\[
D^2E_i(C)[aC,cb]=0.
\]

We now assemble the complete normal operators. We use the Fourier convention
\[
a_k=\frac1{2\pi}\int_0^{2\pi}a(s)e^{-iks}\,ds,
\qquad
c_k=\frac1{2\pi}\int_0^{2\pi}c(s)e^{-iks}\,ds.
\]
Thus
\[
a(s)=\sum_{k\in\mathbb Z}a_ke^{iks},
\qquad
c(s)=\sum_{k\in\mathbb Z}c_ke^{iks},
\]
and Parseval's identity reads
\[
\int_0^{2\pi}|a|^2\,ds=2\pi\sum_k|a_k|^2,
\qquad
\int_0^{2\pi}|c|^2\,ds=2\pi\sum_k|c_k|^2.
\]
Then, with the unnormalized \(L^2\)-inner product,
\[
D^2E_1(C)[u,u]
=
4\pi^2
\sum_{k\in\mathbb Z}
|k|(k^2-1)(|a_k|^2+|c_k|^2),
\]
and
\[
D^2E_2(C)[u,u]
=
-\frac{8\pi^2}{3}
\sum_{k\in\mathbb Z}
|k|(k^2-1)(|a_k|^2+|c_k|^2).
\]
Equivalently,
\[
H_1(C)=2\pi(|D_s|^3-|D_s|)I_2,
\]
\[
H_2(C)=-\frac{4\pi}{3}(|D_s|^3-|D_s|)I_2,
\]
and
\[
H_E(C)=\frac{2\pi}{3}(|D_s|^3-|D_s|)I_2.
\]
Thus
\[
H_1(C)=3H_E(C),\qquad H_2(C)=-2H_E(C).
\]

The normal kernel is
\[
\mathcal K
=
\left\{
aC+cb:\;
a,c\in\operatorname{span}_{\mathbb R}\{1,\cos s,\sin s\}
\right\}.
\]

This proves Theorem~\ref{thm:complete-hessians}.

\section{Vanishing Through Third Order}

Define
\[
\mathcal F=2E_1+3E_2+2\pi^2.
\]
Then
\[
\mathcal F(C)=0,
\qquad
D\mathcal F(C)=0.
\]
The Hessian relation gives
\[
D^2\mathcal F(C)=2H_1(C)+3H_2(C)=0
\]
on the normal space, and reparametrization invariance eliminates tangential slots.

For normal \(u,v,w\), M\"obius invariance under
\[
J(x)=\frac{(x_1,x_2,-x_3)}{|x|^2}
\]
gives
\[
D^3\mathcal F(C)[u,v,w]
=
D^3\mathcal F(C)[-u,-v,-w]
=
-
D^3\mathcal F(C)[u,v,w].
\]
Hence
\[
D^3\mathcal F(C)[u,v,w]=0.
\]
Every third derivative containing a tangential slot vanishes by reparametrization invariance.

Therefore
\[
D\mathcal F(C)=D^2\mathcal F(C)=D^3\mathcal F(C)=0.
\]

For a fixed smooth variation \(u\),
\[
\mathcal F(C+\varepsilon u)
=
\frac{\varepsilon^4}{24}
D^4\mathcal F(C)[u,u,u,u]
+
o(\varepsilon^4).
\]
For a binormal variation, the function is even in \(\varepsilon\), but we do not require a sixth-order remainder.

This proves Theorem~\ref{thm:lower-jets}.

\begin{lemma}[Quartic annihilation of symmetry directions]\label{lem:symmetry-slot}

Let \(X(f)\) be the infinitesimal vector field on the space of parametrized curves generated by a smooth one-parameter family of M\"obius transformations. M\"obius invariance gives
\[
D\mathcal F(f)[X(f)]=0.
\]
Differentiating this identity three times at \(f=C\) in arbitrary smooth directions \(u_1,u_2,u_3\), every term involving a derivative of \(X\) is multiplied by one of
\[
D\mathcal F(C),\qquad D^2\mathcal F(C),\qquad D^3\mathcal F(C),
\]
and hence vanishes. Therefore
\[
D^4\mathcal F(C)[u_1,u_2,u_3,X(C)]=0.
\]
The same conclusion holds when the last slot is tangential, by differentiating the reparametrization invariance identity. Since every element of \(\mathcal K\) is the normal part of an infinitesimal M\"obius direction, the difference being tangential, symmetry of the fourth derivative yields
\[
D^4\mathcal F(C)[u_1,u_2,u_3,k]=0
\qquad(k\in\mathcal K).
\]
Thus low normal modes may be removed not only from pure quartic terms but from every mixed quartic slot.

\end{lemma}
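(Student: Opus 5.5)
Fix a smooth one-parameter family $\Phi_\theta$ of M\"obius transformations with $\Phi_0=\mathrm{id}$, and set $X(f)=\frac{d}{d\theta}\big|_{\theta=0}\Phi_\theta\circ f$. The plan is to turn the invariance $\mathcal F(\Phi_\theta\circ f)=\mathcal F(f)$ into the first-order identity $D\mathcal F(f)[X(f)]=0$, valid for all embedded $f$ near $C$, and then differentiate it three times at $C$. Differentiating in $\theta$ at $\theta=0$ gives the identity, and $X$ is a smooth map on the space of parametrized curves. It acts pointwise, $X(f)(s)=V(f(s))$ for a smooth (conformal Killing) vector field $V$ on $\mathbb R^n\setminus\{\text{pole}\}$, and the pole can be placed away from $C$. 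So $DX(f)[u]=DV(f)u$ and all higher derivatives of $X$ are pointwise multilinear as well.

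Next, define $G(f)=D\mathcal F(f)[X(f)]\equiv0$ and expand $D^3G(C)[u_1,u_2,u_3]$ by the Leibniz rule. The term in which all three derivatives fall on $D\mathcal F$ is $D^4\mathcal F(C)[u_1,u_2,u_3,X(C)]$. Every other term has the form $D^{1+j}\mathcal F(C)[u_{i_1},\dots,u_{i_j},\,D^{3-j}X(C)[\dots]]$ with $0\le j\le 2$. By Theorem~\ref{thm:lower-jets} these vanish, because $D\mathcal F(C)$, $D^2\mathcal F(C)$ and $D^3\mathcal F(C)$ are all zero on the full space of parametrized variations; this includes tangential slots and arbitrary slot entries such as $D^2X(C)[u_1,u_2]$. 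Hence $D^4\mathcal F(C)[u_1,u_2,u_3,X(C)]=0$.

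For the tangential case, I would use the same scheme with $\Phi_\theta$ replaced by reparametrizations. The identity $D\mathcal F(f)[\xi f']=0$ holds for all $f$, and $Y(f)=\xi f'$ is linear in $f$, so $DY(f)[u]=\xi u'$ and $D^2Y=0$. Differentiating three times at $C$, all extra terms again contain $D^j\mathcal F(C)$ with $j\le3$. This gives $D^4\mathcal F(C)[u_1,u_2,u_3,\xi\tau]=0$ for every smooth $\xi$.

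Finally, I would check that each $k=aC+cb\in\mathcal K$ equals $X(C)-\xi\tau$ for some M\"obius generator $X$ and some smooth $\xi$; this is the step needing concrete verification.
\begin{itemize}
\item Translations $e_3$ give $c=1$.
\item Rotations about the $x_1$- and $x_2$-axes give $c\in\{\sin s,\cos s\}$, up to tangential parts.
\item Dilation gives $X(C)=C$, so $a=1$.
\item Translations $e_1,e_2$ give $e_1=\cos s\,C-\sin s\,\tau$, so $a\in\{\cos s,\sin s\}$ modulo $\tau$.
\end{itemize}
Alternatively, since $\mathcal K$ is six-dimensional and these generators give six independent normal parts, this also follows from the definition of $\mathcal K$ as the normal tangent space. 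Then
\[
D^4\mathcal F(C)[u_1,u_2,u_3,k]=D^4\mathcal F(C)[u_1,u_2,u_3,X(C)]-D^4\mathcal F(C)[u_1,u_2,u_3,\xi\tau]=0
\]
by multilinearity, and symmetry of $D^4\mathcal F(C)$ lets the element of $\mathcal K$ sit in any slot.

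The main obstacle I expect is the regularity needed to justify differentiating $\mathcal F$ four times and applying Leibniz to $G$. This requires $\mathcal F$ to be $C^4$ near $C$ on smooth embedded curves with $D^jX$ smooth. I would argue this from the smooth dependence of the integrands on $f$ together with the known variational formulas, as the paper does implicitly in Theorem~\ref{thm:lower-jets}.
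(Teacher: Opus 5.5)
Your proposal is correct and follows the paper's argument: you differentiate $D\mathcal F(f)[X(f)]=0$ and its reparametrization analogue three times at $C$, use Theorem~\ref{thm:lower-jets} to kill every term containing $D^j\mathcal F(C)$ with $j\le 3$, and then write each $k\in\mathcal K$ as $X(C)$ minus a tangential field. Your explicit check that the translations, the rotations about the $x_1$- and $x_2$-axes, and the dilation have normal parts spanning $\mathcal K$ (e.g.\ $e_1=\cos s\,C-\sin s\,\tau$) fills in a step the paper only asserts, and the regularity you flag is supplied by Lemma~\ref{lem:C4} and Corollary~\ref{cor:chainrules}.
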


\section{Quartic Density and Fourier Resonances}

For a binormal variation \(u=cb\), let
\[
Q_4[cb]
:=
[\varepsilon^4]\mathcal F(C+\varepsilon cb)
=
\frac1{24}D^4\mathcal F(C)[cb,cb,cb,cb].
\]

With
\[
p=c'(s),\qquad q=c'(t),\qquad z=c(s)-c(t),
\]
expansion of the arbitrary-parameter formulas to fourth order gives the following expression; Appendix~D justifies differentiation under the integral sign and its diagonal extension.

\begin{proposition}[Quartic density]\label{prop:quartic-density}

\begin{equation}\label{eq:scalar-q4-integral}
Q_4[cb]
=
\iint K_4[c](s,t)\,ds\,dt,
\end{equation}
where
\begin{equation}\label{eq:scalar-K4}
\begin{aligned}
K_4[c]
={}&
-\frac{(p^2-q^2)^2}{4D}
-\frac{(p^2+q^2+10pq)z^2}{D^2}
\\
&+
\frac{12\sin h(p+q)z^3}{D^3}
-\frac{(7+5\cos h)z^4}{D^3}.
\end{aligned}
\end{equation}

The displayed terms must be combined before interpreting the diagonal behavior.

\end{proposition}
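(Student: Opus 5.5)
The plan is a direct Taylor expansion of the integrands of \(E_1\) and \(E_2\) along \(f_\varepsilon=C+\varepsilon cb\) to order \(\varepsilon^4\), followed by forming \(2[\varepsilon^4]E_1+3[\varepsilon^4]E_2\). All the needed ingredients were already recorded in the Hessian section:
\[
|f_\varepsilon'(s)|=\sqrt{1+\varepsilon^2p^2},\qquad f_\varepsilon'(s)\cdot f_\varepsilon'(t)=\cos h+\varepsilon^2pq,
\]
\[
|d|^2=D+\varepsilon^2z^2,\qquad f_\varepsilon'(s)\cdot d=\sin h+\varepsilon^2pz,\qquad f_\varepsilon'(t)\cdot d=\sin h+\varepsilon^2qz.
\]
Here I am using \(\tau(s)\cdot(C(s)-C(t))=\sin h\) and \(\tau(t)\cdot(C(s)-C(t))=\sin h\). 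Every quantity depends only on \(\varepsilon^2\), so the \(\varepsilon^4\) coefficient is the second-order Taylor coefficient in the variable \(\eta=\varepsilon^2\).

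For \(E_1\), I will expand
\[
\sqrt{(1+\eta p^2)(1+\eta q^2)}=1+\tfrac{\eta}{2}(p^2+q^2)+\tfrac{\eta^2}{8}\bigl(2p^2q^2-p^4-q^4\bigr)+O(\eta^3).
\]
The numerator is therefore \((1-\cos h)+\tfrac{\eta}{2}(p-q)^2-\tfrac{\eta^2}{8}(p^2-q^2)^2\). I will multiply this by
\[
(D+\eta z^2)^{-1}=D^{-1}-\eta z^2D^{-2}+\eta^2z^4D^{-3}
\]
and use \(1-\cos h=D/2\). This gives
\[
[\varepsilon^4]E_1=\iint\Bigl[-\frac{(p^2-q^2)^2}{8D}-\frac{(p-q)^2z^2}{2D^2}+\frac{z^4}{2D^2}\Bigr].
\]

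For \(E_2\), I will expand \((\cos h+\eta pq)(D+\eta z^2)^{-1}\) together with
\[
(\sin h+\eta pz)(\sin h+\eta qz)(D+\eta z^2)^{-2},
\]
where \((D+\eta z^2)^{-2}=D^{-2}-2\eta z^2D^{-3}+3\eta^2z^4D^{-4}\). I will then collect the \(\eta^2\) terms and simplify them with \(\sin^2h=D(4-D)/4\) and \(\cos h=1-D/2\), so that everything is reduced to powers \(D^{-1},D^{-2},D^{-3}\). The \(z^4\) terms give \(\cos h\,z^4D^{-3}-3\sin^2h\,z^4D^{-4}\). The mixed terms give \(pqz^2\) and \(\sin h(p+q)z^3\) contributions, together with \(pqz^2D^{-2}\) from the cross term. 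After multiplying by the prefactor \(2\) in \(E_2\) and then by \(3\), adding \(2\times\) the \(E_1\) density, and using \((p-q)^2=p^2+q^2-2pq\), I will compare coefficients with \eqref{eq:scalar-K4}.

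A caution applies at this comparison step. The \(z^4\) coefficient involves \(D^{-2}\) and \(D^{-4}\) pieces that must be rewritten over \(D^{-3}\) via \(\sin^2h\), and the terms may only match after symmetrization in \(s\leftrightarrow t\). They may also require an integration by parts in \(s\) or \(t\), because \(p=\partial_s z\) and \(q=-\partial_t z\). For instance, \(\partial_s(z^4)=4pz^3\), and \(\partial_s\) of powers of \(D\) produce \(\sin h\) factors. I expect this integration-by-parts bookkeeping to be the main obstacle. It is what converts the raw expansion into the stated combination with coefficients \(-(p^2+q^2+10pq)\), \(12\sin h(p+q)\) and \(-(7+5\cos h)\). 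I would first try to absorb discrepancies of the form \(\sin h\,pz^3D^{-3}\) against \(z^4\partial_s(D^{-2})\) and identify the residual exactly.

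Finally, the diagonal must be justified. Each individual term is singular at \(h=0\), since \(z\sim ph\) and \(D\sim h^2\). Only the combined density is integrable, because the expansion is of integrands that are themselves integrable for each \(\varepsilon\). Differentiating under the integral sign and performing the integrations by parts on \(\{|h|>\delta\}\) with \(\delta\to0\) requires showing that the boundary terms vanish in the limit. This is the content deferred to Appendix~D, which I will invoke.
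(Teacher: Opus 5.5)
Your proposal is correct and is essentially the paper's argument: expand the \(E_1\) and \(E_2\) densities along \(C+\varepsilon cb\) in \(\eta=\varepsilon^2\), take the combination \(2E_1+3E_2\), and let Appendix~D supply differentiation under the integral sign and the diagonal extension. The symmetrization and integration-by-parts step you expect to be the main obstacle is not needed. Your expansion gives the \([\varepsilon^4]\) coefficient of the \(E_2\) density directly as \(-\frac{(3+\cos h)z^4}{D^3}+\frac{4\sin h(p+q)z^3}{D^3}-\frac{4pqz^2}{D^2}\), using \(\sin^2h/D^4=(1+\cos h)/(2D^3)\). Adding twice your \(E_1\) coefficient then reproduces \(K_4[c]\) pointwise off the diagonal, because \(-(p-q)^2-12pq=-(p^2+q^2+10pq)\) and \(\frac1{D^2}-\frac{3(3+\cos h)}{D^3}=-\frac{7+5\cos h}{D^3}\); so no boundary terms at \(|h|=\delta\) arise.
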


\subsection{Finite-part coefficients}

For the Fourier calculation we use the zero-mean periodic finite-part extensions of \(D^{-r}\). Their distributional normalization and the passage back to the ordinary combined integral are recorded in Appendix~A. The corresponding Fourier coefficients are
\[
J_1(k)=-|k|,
\]
\[
J_2(k)=\frac{1-k^2}{6}J_1(k)
=
\frac{|k|^3-|k|}{6},
\]
and
\[
J_3(k)=\frac{4-k^2}{20}J_2(k)
=
-\frac{|k|^5-5|k|^3+4|k|}{120}.
\]
These are only a bookkeeping device for the polarized combined density.

Define
\[
S_3(k)
=
\frac{J_3(k+1)-J_3(k-1)}2
=
-\frac{\operatorname{sgn}(k)k^2(k^2-1)}{24},
\]
and
\[
H_3(k)
=
7J_3(k)+\frac52(J_3(k+1)+J_3(k-1))
=
-\frac{|k|(|k|^2-1)(6|k|^2+1)}{60}.
\]

\subsection{Resonances}

Translation invariance implies frequency conservation
\[
k_1+k_2+k_3+k_4=0.
\]
For \(a,b,c\in\mathbb Z_{\ge1}\), all \(3+1\) resonances vanish:
\[
D^4\mathcal F(C)
[
e^{ias}b,
e^{ibs}b,
e^{ics}b,
e^{-i(a+b+c)s}b
]
=0.
\]

For \(a+b=c+d\) and
\[
1\le a\le c\le d\le b,
\]
the surviving scalar \(2+2\) symbol is
\begin{equation}\label{eq:four-wave-symbol}
\begin{aligned}
&D^4\mathcal F(C)
[
e^{ias}b,
e^{ibs}b,
e^{-ics}b,
e^{-ids}b
]
\\
&=
\frac{16\pi^2}{15}
a(a^2-1)
(2a^2+5ab+5b^2-5cd-3).
\end{aligned}
\end{equation}

For \(m\in\mathbb Z_{\ge1}\), let
\[
u_m=\cos(ms)b.
\]
\[
Q_4[u_m]
=
\frac{\pi^2}{60}
m(m^2-1)(7m^2-3).
\]

For \(1\le m<n\),
\[
D^4\mathcal F(C)
[u_m,u_m,u_n,u_n]
=
\frac{4\pi^2}{15}
m(m^2-1)(5n^2+2m^2-3).
\]

A genuine resonance is
\[
2+4=3+3,
\]
for which
\[
D^4\mathcal F(C)
[
e^{2is}b,
e^{4is}b,
e^{-3is}b,
e^{-3is}b
]
=
512\pi^2.
\]

\section{Scalar Fixed-Sum Positivity}

Let
\[
c(s)=c_0+\sum_{k\ge1}
(z_ke^{iks}+\overline{z_k}e^{-iks}).
\]
For fixed \(S\ge4\), let
\[
N=\left\lfloor\frac S2\right\rfloor,
\qquad
I_S:=\{2,\ldots,N\},
\]and, for \(i\in I_S\),
\[
w_i^{(S)}=z_iz_{S-i}.
\]
Set
\[
\mu_i^{(S)}
=
\begin{cases}
2,&2i<S,\\
1,&2i=S.
\end{cases}
\]
and let \(D_\mu=\operatorname{diag}(\mu_i^{(S)})_{i\in I_S}\). Then
\begin{equation}\label{eq:scalar-fixed-sum-Q}
Q_4[cb]
=
\frac{4\pi^2}{15}
\sum_{S\ge4}
(D_\mu w^{(S)})^*
M^{(S)}
(D_\mu w^{(S)}),
\end{equation}
where
\begin{equation}\label{eq:scalar-fixed-sum-M}
M_{ij}^{(S)}
=
r(r^2-1)
[5S^2-5S(r+s)+2r^2+5s^2-3],
\end{equation}
with
\[
r=\min(i,j),\qquad s=\max(i,j).
\]

For
\[
2\le n\le N,
\]
let
\[
M_n=(M_{ij})_{2\le i,j\le n},
\]
so that \(M_n\) is an \((n-1)\times(n-1)\) leading principal block. Define
\[
q_2=e_2,\qquad q_3=e_3,
\]
and for \(j\ge4\),
\[
q_j
=
(S-2j+1)e_{j-2}
-
2(S-2j+2)e_{j-1}
+
(S-2j+3)e_j.
\]
Let
\[
Q_n=(q_2,\ldots,q_n),\qquad
T_n=Q_n^TM_nQ_n.
\]
A direct computation shows that \(T_n\) is tridiagonal. Appendix~B records its entries, treats the exceptional coefficient \(B_4\), and proves the continuant formula. The resulting determinant is
\begin{equation}\label{eq:scalar-det}
\det M_n
=
5\,15^{n-2}n!(n-1)!
\prod_{j=2}^{n-1}(S-j)^2
\bigl[3(S-n)^2+n^2-1\bigr].
\end{equation}
Since \(2\le n\le\lfloor S/2\rfloor\), every factor in \eqref{eq:scalar-det} is positive. Hence all leading principal minors are positive, and Sylvester's criterion gives
\[
M^{(S)}>0
\]
for every \(S\ge4\).

Therefore
\[
Q_4[cb]\ge0,
\]
with equality iff
\[
c\in\operatorname{span}_{\mathbb R}\{1,\cos s,\sin s\}.
\]

\section{Full Normal Quartic Nondegeneracy}

We now treat
\[
u=aC+cb.
\]

\subsection{M\"obius reduction to a two-component binormal problem}

Embed
\[
\mathbb R^3=\{x_4=0\}\subset\mathbb R^4
\]
and regard
\[
C(s)=(\cos s,\sin s,0,0).
\]

The normal bundle is spanned by
\[
C,\qquad e_3,\qquad e_4.
\]

Reflections in \(x_3\) and \(x_4\) show that the \(e_3\)- and \(e_4\)-first variations vanish. Define
\[
T_4(x)
=
e_4+\frac{2(x-e_4)}{|x-e_4|^2}.
\]
Then
\[
T_4(C)=C,
\qquad
DT_{4,C}[C]=e_4.
\]
Hence the radial first variation vanishes as well.

The fixed-binormal Hessian calculation is dimension independent, so the \(e_3\)- and \(e_4\)-directions have the same multipliers:
\[
H_1^{e_3}=H_1^{e_4}
=
2\pi(|D_s|^3-|D_s|),
\]
\[
H_2^{e_3}=H_2^{e_4}
=
-\frac{4\pi}{3}(|D_s|^3-|D_s|).
\]
Coordinate reflections eliminate mixed \(e_3/e_4\) terms. Stationarity and the second-order chain rule for \(T_4\) give the same cancellation in the radial direction. Therefore
\[
D^2\mathcal F_{\mathbb R^4}(C)=0.
\]

Now define
\[
J_4(x)
=
\frac{(x_1,x_2,-x_3,-x_4)}{|x|^2}.
\]
It fixes \(C\) and satisfies
\[
DJ_{4,C}[\tau]=\tau,
\]
\[
DJ_{4,C}[C]=-C,
\qquad
DJ_{4,C}[e_3]=-e_3,
\qquad
DJ_{4,C}[e_4]=-e_4.
\]
Hence
\[
DJ_{4,C}|_{N_C}=-I.
\]
Because the first two derivatives of \(\mathcal F\) vanish, M\"obius invariance implies
\[
D^3\mathcal F_{\mathbb R^4}(C)=0
\]
on purely normal slots, while reparametrization invariance removes tangential slots.

Thus
\[
D\mathcal F_{\mathbb R^4}(C)
=
D^2\mathcal F_{\mathbb R^4}(C)
=
D^3\mathcal F_{\mathbb R^4}(C)
=
0.
\]

With the lower-order jets established in \(\mathbb R^4\), set
\[
\gamma(\varepsilon)=T_4(C+\varepsilon u),
\qquad
v_j=\gamma^{(j)}(0).
\]
The fourth-order chain rule gives
\[
\begin{aligned}
\frac{d^4}{d\varepsilon^4}\mathcal F(\gamma(\varepsilon))\bigg|_{\varepsilon=0}
={}&D^4\mathcal F(C)[v_1,v_1,v_1,v_1]\\
&+6D^3\mathcal F(C)[v_1,v_1,v_2]\\
&+3D^2\mathcal F(C)[v_2,v_2]\\
&+4D^2\mathcal F(C)[v_1,v_3]\\
&+D\mathcal F(C)[v_4].
\end{aligned}
\]
All terms except the first vanish by the full-space lower-jet identities. For
\[
u=aC+cb,
\]
we have
\[
v_1=DT_{4,C}[u]=ae_4+cb.
\]
Since \(\mathcal F\) is M\"obius invariant and its Euclidean formula is unchanged when a curve in \(\mathbb R^3\) is regarded as a curve in \(\mathbb R^4\), we obtain
\[
D^4\mathcal F_{\mathbb R^3}(C)
[(aC+cb)^4]
=
D^4\mathcal F_{\mathbb R^4}(C)
[(ae_4+cb)^4].
\]

Thus the full normal problem in \(\mathbb R^3\) reduces to a two-component binormal problem.

\subsection{Vector-valued quartic form and fixed-sum decomposition}

Let
\[
v=(a,c):\mathbb T\to\mathbb R^2,
\]
and identify \(v=(a,c)\) with the fixed-binormal field \(ae_4+ce_3\) in \(\mathbb R^4\). Set
\[
p=v'(s),\qquad q=v'(t),\qquad z=v(s)-v(t).
\]
Then
\begin{equation}\label{eq:vector-K4}
\begin{aligned}
K_4^{\rm vec}[v]
={}&
-\frac{(|p|^2-|q|^2)^2}{4D}
\\
&-
\frac{
(|p|^2+|q|^2+4p\cdot q)|z|^2
+
6(p\cdot z)(q\cdot z)
}{D^2}
\\
&+
\frac{
12\sin h((p+q)\cdot z)|z|^2
}{D^3}
\\
&-
\frac{(7+5\cos h)|z|^4}{D^3}.
\end{aligned}
\end{equation}

Write
\[
v(s)=v_0+\sum_{k\ge1}
(z_ke^{iks}+\overline{z_k}e^{-iks}),
\qquad z_k\in\mathbb C^2.
\]

For fixed \(S\ge4\), let
\[
I_S=\{2,\ldots,\lfloor S/2\rfloor\}.
\]
For \(p\in I_S\), set
\begin{equation}\label{eq:Wp-def}
\mu_p=
\begin{cases}
2,&2p<S,\\
1,&2p=S,
\end{cases}
\qquad
W_p=\mu_pz_pz_{S-p}^T.
\end{equation}
Decompose
\[
W_p
=
\frac{\operatorname{tr}W_p}{2}I_2
+
W_p^0
+
W_p^-,
\]
where
\[
W_p^-=
\frac{W_p-W_p^T}{2},
\qquad
W_p^0=
\frac{W_p+W_p^T}{2}
-
\frac{\operatorname{tr}W_p}{2}I_2.
\]
Here \(T\) denotes the ordinary transpose. Set
\[
t_p=\frac{\operatorname{tr}W_p}{\sqrt2}.
\]
The complex Frobenius inner product used below is
\[
\langle A,B\rangle_F
:=
\operatorname{tr}(AB^*)
=
\sum_{\alpha,\beta}A_{\alpha\beta}\overline{B_{\alpha\beta}}.
\]

Then
\begin{equation}\label{eq:RUV-decomp}
\begin{aligned}
Q_4[v]
=
\frac{2\pi^2}{15}
\sum_{S\ge4}\sum_{p,q\in I_S}
\Big[
&
R_{pq}t_p\overline{t_q}
\\
&+
U_{pq}\langle W_p^0,W_q^0\rangle_F
\\
&+
V_{pq}\langle W_p^-,W_q^-\rangle_F
\Big].
\end{aligned}
\end{equation}

Let
\[
r=\min(p,q),\qquad s=\max(p,q),\qquad
g(r)=r(r^2-1).
\]
Then
\[
U_{pq}
=
g(r)
[10S^2-15Sr+6r^2-4],
\]
\[
V_{pq}
=
5g(r)(2S-r)(S-2s),
\]
and
\[
R_{pq}
=
g(r)
[10S^2-5Sr-20Ss+2r^2+20s^2-8].
\]

\subsection{Positivity of the fixed-sum blocks and the equality case}

\medskip\noindent\textbf{The \(U\)-block.}

Since
\[
U_{pq}=f(\min(p,q)),
\]
with
\[
f(r)=g(r)[10S^2-15Sr+6r^2-4],
\]
and
\[
f(r)-f(r-1)
=
30r(r-1)(S-r)(S-r+1)>0,
\]
the cumulative-sum factorization gives
\[
U>0.
\]

\medskip\noindent\textbf{The \(V\)-block.}

Write
\[
V_{pq}
=
p_0(\min(p,q))q_0(\max(p,q)),
\]
where
\[
p_0(r)=5g(r)(2S-r),
\qquad
q_0(r)=S-2r.
\]
On noncentral indices,
\[
p_0>0,\qquad q_0>0,
\]
and
\[
p_0(r)-p_0(r-1)
=
10r(r-1)(3S-2r+1)>0.
\]
Hence \(p_0/q_0\) is increasing and \(V\) is positive definite on the noncentral antisymmetric coordinates that actually occur.

If \(S\) is even, the formally included central row and column vanish, so the full matrix with that coordinate is not positive definite. This creates no additional zero direction in the quartic form because
\[
W_{S/2}^-=0
\]
identically.

\medskip\noindent\textbf{The \(R\)-block.}

Let
\[
N=\left\lfloor\frac S2\right\rfloor.
\]
For \(2\le j\le N\), define
\[
\widetilde q_j
=
(S-2j+1)e_{j-2}
-
2(S-2j+2)e_{j-1}
+
(S-2j+3)e_j,
\]
with \(e_0=e_1=0\), and set
\[
\widetilde Q=(\widetilde q_2,\ldots,\widetilde q_N),
\qquad
\widetilde T=\widetilde Q^TR\widetilde Q.
\]
Then \(\widetilde T\) is tridiagonal. For \(3\le j\le N\), its off-diagonal entries are
\[
\widetilde B_j
=
-30(j-2)(j-1)
(S-j+2)(S-j+1)
(S-2j+5)(S-2j+1)<0.
\]
For \(2\le j<N\), the row sums are
\[
\sigma_j
=
120(j-1)(S-j+1)
[(S-j+1)^2+(j-1)^2-1]>0.
\]
For the terminal row,
\[
\sigma_N
=
30(N-1)(N+1)(5N^2+4)
\]
if \(S=2N\), and
\[
\sigma_N
=
240(N-1)(N+2)(N^2+N+2)
\]
if \(S=2N+1\).

Therefore
\begin{equation}\label{eq:R-block-positive}
x^*\widetilde T x
=
\sum_{j=3}^{N}
(-\widetilde B_j)|x_j-x_{j-1}|^2
+
\sum_{j=2}^{N}
\sigma_j|x_j|^2>0
\end{equation}
for every nonzero \(x\). Moreover, \(\widetilde Q\) is triangular with
\[
\det\widetilde Q
=
\prod_{j=2}^{N}(S-2j+3)>0,
\]
so the congruence is invertible. Hence
\[
R>0.
\]

\medskip\noindent\textbf{Equality case.}

If \(z_m\ne0\) for some \(m\ge2\), consider the block \(S=2m\). Its central matrix is
\[
W_m=z_mz_m^T\ne0.
\]
Since it is symmetric,
\[
W_m^-=0,
\]
but its trace and symmetric-traceless parts cannot both vanish. Since \(R>0\) and \(U>0\), that block contributes strictly positively.

For trigonometric polynomials it follows that
\[
Q_4[v]=0
\iff
z_m=0\quad(m\ge2).
\]
For a smooth field, the fixed-sum expansion is absolutely summable as explained in Appendix D. If some fixed coefficient \(z_m\ne0\) with \(m\ge2\), then the block \(S=2m\) is unchanged in all sufficiently large Fourier truncations and contributes a strictly positive amount. Hence the same equality characterization holds for arbitrary smooth fields.

Returning to \(\mathbb R^3\),
\[
D^4\mathcal F(C)[u,u,u,u]\ge0,
\]
with equality iff
\[
u\in\mathcal K.
\]

This proves Theorem~\ref{thm:quartic}.

\section{Discussion and Open Problems}

The functional
\[
\mathcal F
=
3(E-4)-(E_1-2\pi^2)
\]
measures the first deviation of \(E_1-E_1(C)\) from \(3(E-E(C))\). These quantities agree through third order at the circle.

For every fixed normal direction \(u\notin\mathcal K\),
\[
\mathcal F(C+\varepsilon u)
=
\frac{\varepsilon^4}{24}
D^4\mathcal F(C)[u^4]
+
o(\varepsilon^4),
\]
so
\[
\mathcal F(C+\varepsilon u)>0
\]
for sufficiently small nonzero \(\varepsilon\).

This is not a uniform coercivity statement.

For
\[
c_m=m^{-3/2}\cos(ms),
\]
one has
\[
\|c_m\|_{H^{3/2}}\asymp1,
\]
but
\[
Q_4[c_mb]\sim \frac{7\pi^2}{60m}\to0.
\]
Therefore no estimate of the form
\[
Q_4[cb]\ge c\|c\|_{H^{3/2}}^4
\]
can hold uniformly modulo the low modes.

The pure-mode scaling
\[
Q_4[\cos(ms)b]\asymp m^5
\]
suggests an \(H^{5/4}\)-type scale, but no multimode coercivity estimate is proved.

Likewise, quartic directional positivity does not imply a nonlinear local minimum in an infinite-dimensional neighborhood, and we do not claim a \L{}ojasiewicz--Simon inequality or gradient-flow stability for \(\mathcal F\).

\appendix

\section{Finite-Part Fourier Identities and the Four-Wave Tensor}

Let
\[
D(h)=2-2\cos h.
\]

This appendix fixes the distributional convention used for the singular kernels and explains why the resulting finite-part calculation agrees with the ordinary integral of the combined quartic density.

\subsection{Periodic finite-part distributions and the ordinary integral}

Let
\[
F_1
:=
-\frac12\partial_h\operatorname{pv}\cot\frac h2.
\]
Define recursively
\[
F_2:=\frac{F_1''+F_1}{6},
\qquad
F_3:=\frac{F_2''+4F_2}{20}.
\]
These are periodic distributions. We normalize their Fourier coefficients by
\[
\langle F_r,e^{ikh}\rangle=\pi J_r(k).
\]
Equivalently, the usual \(1/(2\pi)\)-normalized Fourier coefficient of \(F_r\) is \(J_r(k)/2\).

With this convention,
\[
J_1(k)=-|k|,
\]
\[
J_2(k)=\frac{|k|^3-|k|}{6},
\]
and
\[
J_3(k)
=
-\frac{|k|^5-5|k|^3+4|k|}{120}.
\]
In particular,
\[
J_r(0)=0.
\]

The recurrence is the Fourier form of
\[
F_2=\frac{F_1''+F_1}{6},
\qquad
F_3=\frac{F_2''+4F_2}{20}.
\]

A key feature of this normalization is the multiplication identity
\[
D\,F_1=1,\qquad
D\,F_2=F_1,\qquad
D\,F_3=F_2,
\]
where \(D\) denotes multiplication by the smooth function \(D(h)=2-2\cos h\). Hence
\[
D^3F_3=1.
\]

Define also
\[
S_3(k)
=
\frac{J_3(k+1)-J_3(k-1)}2
=
-\frac{\operatorname{sgn}(k)k^2(k^2-1)}{24},
\]
and
\[
H_3(k)
=
7J_3(k)
+\frac52[J_3(k+1)+J_3(k-1)]
=
-\frac{|k|(|k|^2-1)(6|k|^2+1)}{60}.
\]

\medskip\noindent\textbf{Relation to the ordinary combined integral.} The distributions \(F_r\) are not used to assign separate geometric meanings to the individually divergent terms of the physical-space formula. They are only a common algebraic representation after the quartic density has been combined.

For trigonometric-polynomial inputs, put the polarized combined quartic density over the common denominator \(D^3\):
\[
K(h)=\frac{N(h)}{D(h)^3}.
\]
The numerator \(N\) is a trigonometric polynomial. By the desingularization proved in Appendix D, \(K\) extends continuously across \(h=0\). Since \(D^3\) has a zero of order six, \(N\) has a zero of order at least six at \(h=0\). Equivalently, with \(x=e^{ih}\),
\[
D^3=-\frac{(x-1)^6}{x^3},
\]
and the Laurent polynomial \(N(x)\) is divisible by \((x-1)^6\). Thus there is a trigonometric polynomial \(\widetilde K\) such that
\[
N=D^3\widetilde K,
\]
and \(\widetilde K=K\) off the diagonal, hence everywhere by continuity.

Therefore
\[
\langle F_3,N\rangle
=
\langle F_3,D^3\widetilde K\rangle
=
\langle D^3F_3,\widetilde K\rangle
=
\int_{-\pi}^{\pi}\widetilde K(h)\,dh.
\]
Thus the finite-part Fourier bookkeeping gives exactly the ordinary integral of the combined quartic density.

This is the only sense in which \(F_1,F_2,F_3\) are used below.

\subsection{Four-wave tensor and resonance chambers}

For
\[
k_1+k_2+k_3+k_4=0,
\]
write
\[
k_A=\sum_{j\in A}k_j.
\]
Define
\[
\begin{aligned}
\Phi(k_1,k_2,k_3,k_4)
={}&
\frac12k_1k_2k_3k_4J_1(k_1+k_2)
\\
&+
k_1k_2
\sum_{A\subset\{3,4\}}
(-1)^{2-|A|}
\left[
J_2(k_1+k_2+k_A)+J_2(k_A)
\right.
\\
&\hspace{5.7cm}\left.
+4J_2(k_1+k_A)
\right]
\\
&+
6k_1k_3
\sum_{A\subset\{2,4\}}
(-1)^{2-|A|}
J_2(k_1+k_A)
\\
&+
12k_1
\sum_{A\subset\{2,3,4\}}
(-1)^{3-|A|}
\left[
S_3(k_1+k_A)+S_3(k_A)
\right]
\\
&-
\sum_{A\subset\{1,2,3,4\}}
(-1)^{4-|A|}
H_3(k_A).
\end{aligned}
\]

Throughout Appendices A and C, the constant vectors \(\xi_j\) belong to the complexification of the fixed binormal plane
\[
\mathcal B=\operatorname{span}_{\mathbb R}\{e_3,e_4\}\subset\mathbb R^4.
\]
Then, with the Euclidean product extended complex bilinearly at the polarization stage,
\[
\begin{aligned}
&
\frac{
D^4\mathcal F(C)
[
e^{ik_1s}\xi_1,\ldots,e^{ik_4s}\xi_4
]
}{
2\pi^2
}
\\
&=
\sum_{\sigma\in\mathfrak S_4}
\Phi(k_{\sigma(1)},\ldots,k_{\sigma(4)})
(\xi_{\sigma(1)}\cdot\xi_{\sigma(2)})
(\xi_{\sigma(3)}\cdot\xi_{\sigma(4)}).
\end{aligned}
\]

If the total frequency is nonzero, the tensor vanishes.

\medskip\noindent\textbf{The \(3+1\) cancellation.}

For positive integers \(a,b,c\), set
\[
(k_1,k_2,k_3,k_4)
=
(a,b,c,-a-b-c).
\]
After fixing the signs of all subset sums, each of the three invariant contraction coefficients becomes an ordinary polynomial. Direct simplification gives
\[
T_{12|34}=T_{13|24}=T_{14|23}=0.
\]
Hence the \(3+1\) resonance vanishes as a vector-valued tensor, not merely after scalar contraction:
\[
D^4\mathcal F(C)
[
e^{ias}\xi_1,
e^{ibs}\xi_2,
e^{ics}\xi_3,
e^{-i(a+b+c)s}\xi_4
]
=0.
\]

\medskip\noindent\textbf{The \(2+2\) chamber.}

For
\[
a+b=c+d,
\qquad
1\le a\le c\le d\le b,
\]
write
\[
a=A,\qquad
c=A+X,\qquad
d=A+X+Y,\qquad
b=A+2X+Y,
\]
with
\[
A\ge1,\qquad X,Y\ge0.
\]
All absolute values in the Fourier coefficients then have fixed signs, and the tensor reduces to an ordinary polynomial identity.

In the scalar binormal case,
\[
\begin{aligned}
&D^4\mathcal F(C)
[
e^{ias}b,
e^{ibs}b,
e^{-ics}b,
e^{-ids}b
]
\\
&=
\frac{16\pi^2}{15}
a(a^2-1)
(2a^2+5ab+5b^2-5cd-3).
\end{aligned}
\]
The boundary cases \(X=0\), \(Y=0\), and \(X=Y=0\) are included in the same polynomial identity.

The real-mode and fixed-sum formulas in Sections 5 and 6 follow by the stated \(1/2\)-Fourier coefficients and the definition
\[
Q_4=\frac1{24}D^4\mathcal F(C).
\]

\section{Scalar Determinant Calculation}

For fixed \(S\), define
\[
\begin{aligned}
M_{ij}
&=
r(r^2-1)
[5S^2-5S(r+s)+2r^2+5s^2-3],\\
r&=\min(i,j),\qquad s=\max(i,j).
\end{aligned}
\]
For \(2\le n\le\lfloor S/2\rfloor\), let
\[
M_n=(M_{ij})_{2\le i,j\le n}.
\]
The weighted difference vectors
\[
q_2=e_2,\qquad q_3=e_3,
\]
\[
q_j
=
(S-2j+1)e_{j-2}
-
2(S-2j+2)e_{j-1}
+
(S-2j+3)e_j,
\qquad 4\le j\le n,
\]
annihilate the quadratic dependence on the larger index. With
\[
Q_n=(q_2,\ldots,q_n),
\qquad
T_n=Q_n^TM_nQ_n,
\]
the matrix \(T_n\) is tridiagonal.

Its diagonal entries are
\[
A_2=30(S^2-4S+5),
\qquad
A_3=120(S^2-6S+12),
\]
and, for \(j\ge4\),
\[
A_j
=
30(j-1)^2(S-j+1)^2\bigl[(S-2j+2)^2+1\bigr].
\]
The off-diagonal entries are
\[
B_3=30(S^2-5S+10),
\qquad
B_4=-90(S-7)(S-3)(S-2),
\]
and, for \(j\ge5\),
\[
B_j
=
-15(j-2)(j-1)(S-j+2)(S-j+1)
(S-2j+5)(S-2j+1).
\]
The coefficient \(B_4\) is exceptional and is not obtained by substituting \(j=4\) into the last formula.

Let
\[
\Theta_1=1,\qquad
\Theta_2=A_2,\qquad
\Theta_n=\det T_n\quad(n\ge2).
\]
Then
\begin{equation}\label{eq:continuant}
\Theta_n=A_n\Theta_{n-1}-B_n^2\Theta_{n-2},
\qquad n\ge3.
\end{equation}
The initial cases are
\[
\Theta_2=30(S^2-4S+5),
\]
\[
\Theta_3
=
900(S-2)^2\bigl(3(S-3)^2+8\bigr),
\]
and
\[
\Theta_4
=
162000(S-2)^2(S-3)^2(S-5)^2
\bigl(3(S-4)^2+15\bigr),
\]
where the last identity uses the exceptional coefficient \(B_4\).

For completeness, write
\[
\Theta_n=P_nR_n,
\]
where
\[
P_n
=
5\,15^{n-2}n!(n-1)!
\prod_{j=2}^{n-1}(S-j)^2
\prod_{j=4}^{n}(S-2j+3)^2,
\qquad
R_n=3(S-n)^2+n^2-1.
\]
For \(n\ge4\),
\[
\frac{P_n}{P_{n-1}}
=
15n(n-1)(S-n+1)^2(S-2n+3)^2.
\]
For \(n\ge5\), substitution into \eqref{eq:continuant} reduces the induction to
\begin{equation}\label{eq:continuant-reduced}
\begin{aligned}
n(S-2n+3)^2R_n
={}&
2(n-1)\bigl((S-2n+2)^2+1\bigr)R_{n-1}
\\
&-
(n-2)(S-2n+1)^2R_{n-2},
\end{aligned}
\end{equation}
which follows by expanding \(R_j=3(S-j)^2+j^2-1\).
This completes the induction for \(\Theta_n=P_nR_n\).
Finally,
\[
\det Q_n
=
\prod_{j=4}^{n}(S-2j+3),
\]
so the congruence \(T_n=Q_n^TM_nQ_n\) gives
\[
\det M_n
=
5\,15^{n-2}n!(n-1)!
\prod_{j=2}^{n-1}(S-j)^2
R_n,
\]
which is \eqref{eq:scalar-det}.

\section{Vector Fourier Tensor and the \texorpdfstring{\(R/U/V\)}{R/U/V} Decomposition}

As in Appendix A, the vectors \(\xi_j\) belong to the complexification of the fixed binormal plane
\[
\mathcal B=\operatorname{span}_{\mathbb R}\{e_3,e_4\}\subset\mathbb R^4.
\]
For fixed total frequency \(S\ge4\), let
\[
I_S=\{2,\ldots,\lfloor S/2\rfloor\},
\]
and let \(p,q\in I_S\). Consider the frequencies
\[
p,\quad S-p,\quad -q,\quad -(S-q).
\]
The tensor has the form
\[
\begin{aligned}
&D^4\mathcal F(C)
[
e^{ips}\xi_1,
e^{i(S-p)s}\xi_2,
e^{-iqs}\xi_3,
e^{-i(S-q)s}\xi_4
]
\\&=
\frac{8\pi^2}{15}
\Big[
\alpha_{pq}
(\xi_1\cdot\xi_2)(\xi_3\cdot\xi_4)
+
\beta_{pq}
(\xi_1\cdot\xi_3)(\xi_2\cdot\xi_4)
+
\gamma_{pq}
(\xi_1\cdot\xi_4)(\xi_2\cdot\xi_3)
\Big].
\end{aligned}
\]

With
\[
r=\min(p,q),\qquad s=\max(p,q),\qquad g(r)=r(r^2-1),
\]
\[
\alpha_{pq}
=
-g(r)[-5Sr+10Ss+2r^2-10s^2+2].
\]
Set
\[
U_{pq}=\beta_{pq}+\gamma_{pq},
\qquad
V_{pq}=\beta_{pq}-\gamma_{pq}.
\]
Then
\[
U_{pq}
=
g(r)[10S^2-15Sr+6r^2-4],
\]
\[
V_{pq}
=
5g(r)(2S-r)(S-2s).
\]
The scalar consistency identity is
\[
\alpha_{pq}+\beta_{pq}+\gamma_{pq}
=
2M_{pq}.
\]

For
\[
W_p=\mu_pz_pz_{S-p}^T,
\]
where \(T\) denotes the ordinary transpose, decompose
\[
W_p
=
\frac{\operatorname{tr}W_p}{2}I_2
+
W_p^0+W_p^-,
\]
with
\[
W_p^-=
\frac{W_p-W_p^T}{2},
\qquad
W_p^0=
\frac{W_p+W_p^T}{2}
-
\frac{\operatorname{tr}W_p}{2}I_2.
\]
For the resulting real quartic form we use the Hermitian Frobenius pairing
\[
\langle A,B\rangle_F
=
\operatorname{tr}(AB^*)
=
\sum_{\alpha,\beta}A_{\alpha\beta}\overline{B_{\alpha\beta}}.
\]
Then the trace coefficient is
\[
R_{pq}=2\alpha_{pq}+U_{pq},
\]
that is,
\[
R_{pq}
=
g(r)
[10S^2-5Sr-20Ss+2r^2+20s^2-8].
\]

\section{Local \texorpdfstring{\(C^4\)}{C4}-Regularity and Diagonal Desingularization}

This appendix justifies differentiation under the double integral and the passage from trigonometric polynomials to smooth variations. No optimal regularity statement is intended.

\subsection{Desingularization and local \texorpdfstring{\(C^4\)}{C4}-regularity}

Write
\[
t=s-h.
\]
For \(h\ne0\), define
\[
m_f(s,h)
=
\frac{f(s)-f(s-h)}h
=
\int_0^1 f'(s-\theta h)\,d\theta.
\]
Define
\[
A_f(s,h)
=
\frac{f'(s)-m_f(s,h)}h
=
\int_0^1(1-\theta)f''(s-\theta h)\,d\theta,
\]
\[
B_f(s,h)
=
\frac{f'(s-h)-m_f(s,h)}h
=
-\int_0^1\theta f''(s-\theta h)\,d\theta.
\]
Finally, with
\[
\tau_f=\frac{f'}{|f'|},
\]
set
\[
L_f(s,h)
=
\frac{\tau_f(s)-\tau_f(s-h)}h
=
\int_0^1\tau_f'(s-\theta h)\,d\theta.
\]

The integral formulas define \(m_f,A_f,B_f,L_f\) continuously at \(h=0\). If \(f\in C^2\), then these quantities are continuous in \((s,h)\).

For \(C\) and all curves in a sufficiently small \(C^1\)-neighborhood of \(C\),
\[
|f'|\ge c>0,
\qquad
|m_f|\ge c>0.
\]
The second inequality is the local chord estimate in desingularized form. Indeed, for \(0<|h|\le\pi\),
\[
|m_C(s,h)|
=
\frac{2|\sin(h/2)|}{|h|}
\ge
\frac2\pi,
\qquad
\|m_f-m_C\|_\infty
\le
\|f'-C'\|_\infty,
\]
so the bound persists in a sufficiently small \(C^1\)-neighborhood of \(C\).

\medskip\noindent\textbf{Desingularized energy densities.} Let \(\mathscr I_1(f;s,h)\) and \(\mathscr I_2(f;s,h)\) denote the arbitrary-parameter densities of \(E_1\) and \(E_2\).

Since
\[
f(s)-f(s-h)=h\,m_f,
\]
and
\[
1-\tau_f(s)\cdot\tau_f(s-h)
=
\frac12|\tau_f(s)-\tau_f(s-h)|^2,
\]
the first density becomes
\begin{equation}\label{eq:desing-I1}
\mathscr I_1(f;s,h)
=
\frac{|f'(s)||f'(s-h)|}{2|m_f(s,h)|^2}
|L_f(s,h)|^2.
\end{equation}

For the second density, let
\[
P_m^\perp
=
I-\frac{m\otimes m}{|m|^2}.
\]
Because
\[
P_{m_f}^\perp A_f
=
\frac1hP_{m_f}^\perp f'(s),
\qquad
P_{m_f}^\perp B_f
=
\frac1hP_{m_f}^\perp f'(s-h),
\]
we obtain
\begin{equation}\label{eq:desing-I2}
\mathscr I_2(f;s,h)
=
\frac{2}{|m_f(s,h)|^2}
\left\langle
P_{m_f(s,h)}^\perp A_f(s,h),
P_{m_f(s,h)}^\perp B_f(s,h)
\right\rangle.
\end{equation}

These formulas contain no negative power of \(h\). The apparent diagonal singularities of the original arbitrary-parameter formulas have been completely removed before any variation is taken.

\medskip\noindent\textbf{Local \(C^4\)-regularity.}

\begin{lemma}\label{lem:C4}

There exists a sufficiently small \(C^2\)-neighborhood \(\mathcal U\) of \(C\) such that \(E_1\) and \(E_2\) are \(C^4\) functionals on \(\mathcal U\). More precisely, for \(i=1,2\) and \(0\le j\le4\),
\[
\left|
D^j\mathscr I_i(f;s,h)
[u_1,\ldots,u_j]
\right|
\le
C_j
\prod_{\ell=1}^j\|u_\ell\|_{C^2},
\]
uniformly for
\[
f\in\mathcal U,\qquad
(s,h)\in\mathbb T\times[-\pi,\pi].
\]

\end{lemma}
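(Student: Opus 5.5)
The plan is to treat the desingularized densities \eqref{eq:desing-I1} and \eqref{eq:desing-I2} as compositions of smooth finite-dimensional maps with bounded linear maps of \(f\). Fix \((s,h)\). The maps
\[
f\mapsto \bigl(f'(s),\,f'(s-h),\,m_f(s,h),\,A_f(s,h),\,B_f(s,h)\bigr)
\]
are linear in \(f\). By the integral representations, each is bounded by \(C\|f\|_{C^2}\), uniformly in \((s,h)\in\mathbb T\times[-\pi,\pi]\), including \(h=0\). The quantity \(L_f\) is not linear: it equals \(\int_0^1\tau_f'(s-\theta h)\,d\theta\), with \(\tau_f'=f''/|f'|-f'(f'\cdot f'')/|f'|^3\). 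So \(L_f\) is an average over \(\theta\) of a smooth function
\[
\Lambda(f'(\sigma),f''(\sigma)),\qquad \sigma=s-\theta h,
\]
where \(\Lambda(x,y)=y/|x|-x(x\cdot y)/|x|^3\) is smooth on \(\{|x|\ge c\}\times\mathbb R^n\) and linear in \(y\).

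The first step is to choose \(\mathcal U\) as a \(C^2\)-ball (in particular a \(C^1\)-ball) small enough that \(|f'|\ge c\) and \(|m_f|\ge c\) hold uniformly, as shown above via \(|m_C|\ge 2/\pi\) and \(\|m_f-m_C\|_\infty\le\|f'-C'\|_\infty\). On the region \(\{|x|\ge c,\ |m|\ge c\}\), every nonlinear ingredient is a smooth (real-analytic) function of finitely many vector arguments. These ingredients are \(|x|\), \(1/|m|^2\), \(P_m^\perp=I-m\otimes m/|m|^2\), and \(\Lambda\).

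The second step is to differentiate. With \(f_\varepsilon=f+\sum\varepsilon_\ell u_\ell\), the mixed derivative \(\partial_{\varepsilon_1}\cdots\partial_{\varepsilon_j}\) at \(0\) of each density is given by the multivariable chain rule. For \(\mathscr I_2\), write it as \(\Psi(m_f,A_f,B_f)\) with \(\Psi\) smooth on \(|m|\ge c\). Then the derivative is a finite sum of terms \(D^k\Psi[\ldots]\) applied to the linear images of the \(u_\ell\), each bounded by \(C\|u_\ell\|_{C^2}\). The bounds on \(D^k\Psi\) are uniform because the arguments range over a compact set: \(|m|\in[c,C]\) and \(|A|,|B|\le C\) on \(\mathcal U\). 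For \(\mathscr I_1\), the only extra point is that differentiating \(L_f\) in \(f\) passes under the \(\theta\)-integral. This is justified because the integrand's \(f\)-derivatives
\[
D^k\Lambda(f'(\sigma),f''(\sigma))[(u'_\ell,u''_\ell)(\sigma),\ldots]
\]
are continuous in \((\theta,\sigma)\) and uniformly bounded by \(C\prod\|u_\ell\|_{C^2}\). The products with \(|f'(s)||f'(s-h)|/(2|m_f|^2)\) are handled by the Leibniz rule. This gives the stated pointwise estimate for \(0\le j\le4\) (indeed for every \(j\)).

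The third step is to pass from the densities to \(E_i\). The pointwise bounds are uniform on the compact set \(\mathbb T\times[-\pi,\pi]\), and the \(j\)-th derivatives are continuous in \(f\) in the \(C^2\) topology, uniformly in \((s,h)\); the latter follows from the same chain-rule formula, since \(D^k\Psi\) is uniformly continuous on the compact region. Differentiation under \(\iint\,ds\,dh\) is therefore legitimate by dominated convergence or the mean value theorem. We also use that the change of variables \(t=s-h\) identifies the original double integral with the desingularized one off the null set \(h=0\). Together these show that \(E_i\) is \(C^4\) (Fréchet) on \(\mathcal U\subset C^2\), with \(D^jE_i(f)[u_1,\ldots,u_j]=\iint D^j\mathscr I_i\).

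The main obstacle I expect is bookkeeping rather than depth: verifying that the identities \eqref{eq:desing-I1} and \eqref{eq:desing-I2} hold for all \(f\in\mathcal U\), not just for arc-length curves. For \eqref{eq:desing-I2}, this requires checking that the \(E_2\) integrand, rewritten with \(d=hm_f\), equals \(2|m|^{-2}\langle P^\perp f'(s),P^\perp f'(t)\rangle/h^2\), and then using \(P^\perp_m m=0\) to absorb the \(h^{-2}\). The remaining care is in confirming continuity of the \(j\)-th derivatives in \(f\), so that Fréchet \(C^4\) (not merely Gâteaux) is obtained.
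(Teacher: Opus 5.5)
Your proposal is correct and follows essentially the same route as the paper. Both arguments treat $f\mapsto m_f,A_f,B_f$ as bounded linear maps out of $C^2$, use $|f'|\ge c$ and the chord bound $|m_f|\ge c$ to make $\tau_f$ (hence $L_f$), $|m|^{-2}$ and $P_m^\perp$ smooth with uniformly bounded derivatives, and conclude by composition and integration over the compact set $\mathbb T\times[-\pi,\pi]$. Your version differs only in working pointwise in $(s,h)$ rather than with $C^0(\mathbb T\times[-\pi,\pi])$-valued maps. It also makes explicit two points the paper leaves implicit: that the desingularized identities hold for arbitrary parametrizations, and that the derivatives are continuous, which is what gives Fr\'echet rather than merely G\^ateaux $C^4$.
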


\begin{proof}

The maps
\[
f\mapsto m_f,\quad
f\mapsto A_f,\quad
f\mapsto B_f
\]
are bounded linear maps from \(C^2\) to \(C^0(\mathbb T\times[-\pi,\pi])\).

On a \(C^1\)-neighborhood in which \(|f'|\ge c\), the map
\[
f\mapsto\tau_f=\frac{f'}{|f'|}
\]
is smooth as a map from \(C^2\) to \(C^1\), and therefore
\[
f\mapsto L_f
\]
is smooth from \(C^2\) to \(C^0\).

Likewise, on the set \(|m_f|\ge c\), the maps
\[
m\mapsto |m|^{-2},
\qquad
m\mapsto P_m^\perp
\]
are smooth with all derivatives uniformly bounded on bounded subsets separated from \(m=0\).

The desingularized formulas above are therefore finite compositions and products of smooth maps on bounded \(C^2\)-sets. Their Fréchet derivatives through order four satisfy the displayed uniform multilinear bound.

Integrating over the compact parameter set \(\mathbb T\times[-\pi,\pi]\) gives \(C^4\)-regularity of \(E_1,E_2\) and justifies differentiation under the integral sign through fourth order.

\end{proof}

\begin{corollary}\label{cor:chainrules}

All M\"obius and reparametrization chain-rule arguments used in Sections 2, 4, and 7 are legitimate for smooth finite-dimensional families near the circle, and the fourth derivative may be computed by differentiating the desingularized densities under the integral.

\end{corollary}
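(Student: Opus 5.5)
The plan is to reduce every assertion in Corollary~\ref{cor:chainrules} to Lemma~\ref{lem:C4} combined with the ordinary chain rule for compositions of a $C^4$ map on a Banach space with a smooth finite-dimensional curve. Throughout, let $\mathcal U$ be the $C^2$-neighborhood of Lemma~\ref{lem:C4}. First I will check that the desingularized densities \eqref{eq:desing-I1} and \eqref{eq:desing-I2} agree with the original arbitrary-parameter densities for $h\neq0$. This is the elementary algebra already displayed: $f(s)-f(s-h)=hm_f$, the identity $1-\tau\cdot\tilde\tau=\tfrac12|\tau-\tilde\tau|^2$, and the projections $P_{m_f}^\perp$. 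Hence $E_i(f)=\iint\mathscr I_i(f;s,h)\,ds\,dh$ on $\mathcal U$, with a bounded continuous integrand.

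Next, for a smooth finite-dimensional family $f_\varepsilon=C+\sum_j\varepsilon_ju_j$, with $u_j$ smooth and $|\varepsilon|$ small, the map $\varepsilon\mapsto f_\varepsilon$ is affine into $C^2$ and lands in $\mathcal U$. I will show $\partial_\varepsilon^\alpha\iint\mathscr I_i=\iint\partial_\varepsilon^\alpha\mathscr I_i$ for $|\alpha|\le4$ by induction on $|\alpha|$. Each step is a difference quotient controlled by the mean value theorem. By Lemma~\ref{lem:C4}, the quotients are bounded uniformly in $(s,h)$ by $C_{j+1}\prod\|u_\ell\|_{C^2}$, and the domain $\mathbb T\times[-\pi,\pi]$ is compact, so dominated convergence applies. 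Continuity of $D^4\mathscr I_i$ in $f$ then identifies the result with the multilinear form $D^4E_i(C)[u_{j_1},\dots,u_{j_4}]$. This is exactly the statement that the fourth derivative may be computed by differentiating the desingularized densities under the integral.

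For the chain-rule arguments I treat the two kinds of symmetry separately.

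\textbf{M\"obius symmetries.} The maps $T$, $J$, $T_4$, $J_4$ and the one-parameter families of Lemma~\ref{lem:symmetry-slot} are smooth on a neighborhood of the image of $C$; their poles $b$, $0$, $e_4$ lie off the circle. Hence, for $\varepsilon$ small, $\varepsilon\mapsto T\circ f_\varepsilon$ is a smooth curve into $C^2$ that stays in $\mathcal U$ (after shrinking). The invariance $E_i(T\circ f_\varepsilon)=E_i(f_\varepsilon)$ is an identity of $C^4$ functions of $\varepsilon$. Faà di Bruno's formula for a $C^4$ functional composed with a smooth curve is then the finite-dimensional chain rule, and it yields the displayed fourth-order formula with $v_j=\gamma^{(j)}(0)$.

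\textbf{Reparametrizations.} I use $f_\varepsilon\circ\varphi_\eta$ with $\varphi_\eta$ the flow of a smooth vector field $\xi$. Since $f_\varepsilon$ and $\varphi_\eta$ are smooth jointly in $(\varepsilon,\eta,s)$, the map $(\varepsilon,\eta)\mapsto f_\varepsilon\circ\varphi_\eta$ is smooth into $C^2$, even though $f\mapsto f\circ\varphi$ loses derivatives on $C^2$ itself. Differentiating $E_i(f_\varepsilon\circ\varphi_\eta)=E_i(f_\varepsilon)$ in $\eta$ at $\eta=0$ and then in $\varepsilon$ gives the identities $D^2G(C)[u,\xi\tau]=0$ and $D^3G(C)[u,v,\xi\tau]=0$ of Section~2, as well as their quartic analogue in Lemma~\ref{lem:symmetry-slot}.

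The main obstacle I expect is precisely this loss of derivatives. Neither $f\mapsto f\circ\varphi$ nor, strictly, the vector field $X(f)$ is a smooth map on $C^2$. So the argument must be phrased only along smooth finite-dimensional families of smooth curves, which is why the corollary is restricted to that setting. One point needs separate care: differentiating $D\mathcal F(f)[X(f)]=0$ three times requires $f\mapsto X(f)$ to be smooth from $C^2$ into $C^2$ along such families. For M\"obius fields this holds because $X(f)=\dot T\circ f$ with $\dot T$ a smooth ambient vector field near $C$.

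Finally, the $\mathbb R^4$ statements of Section~7 follow verbatim. The desingularized formulas and Lemma~\ref{lem:C4} are dimension independent, and a curve in $\mathbb R^3$ regarded in $\mathbb R^4$ has the same densities.
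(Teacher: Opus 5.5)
Your proposal is correct and follows the paper's own route. The paper gives no separate proof: the corollary is read off from Lemma~\ref{lem:C4}. There the desingularized densities are $C^4$ in $f$ on a $C^2$-neighborhood of $C$, with bounds uniform in $(s,h)$, so $E_1,E_2$ are $C^4$ and derivatives pass under the integral over the compact set $\mathbb T\times[-\pi,\pi]$; the ordinary chain rule then does the rest.

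Your dominated-convergence argument and your remark about reparametrization make explicit what the paper leaves implicit. Reparametrization loses a derivative on $C^2$, so the reparametrization identities must be run along jointly smooth families $(\varepsilon,\eta)\mapsto f_\varepsilon\circ\varphi_\eta$. The M\"obius identities do not need this: $f\mapsto T\circ f$ and $f\mapsto \dot T\circ f$ are already smooth on $C^2$, as you yourself observe after first suggesting otherwise.
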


\subsection{Diagonal behavior and Fourier extension}

For a vector-valued binormal variation, set
\[
p=v'(s),\qquad A=v''(s).
\]
The combined quartic density extends continuously to the diagonal and satisfies
\[
K_4^{\rm vec}[v](s,s)
=
\frac12
\left[
|v'(s)|^2|v''(s)|^2
+
(v'(s)\cdot v''(s))^2
-
|v'(s)|^4
\right].
\]

The cleanest verification uses the desingularized component densities. On the diagonal,
\[
\mathscr I_1(f)(s,s)=\frac12|\tau_f'(s)|^2,
\qquad
\mathscr I_2(f)(s,s)=-\frac12|\tau_f'(s)|^2.
\]
Hence
\[
2\mathscr I_1(f)(s,s)+3\mathscr I_2(f)(s,s)
=-\frac12|\tau_f'(s)|^2.
\]
For
\[
f_\varepsilon=C+\varepsilon v
\]
one has
\[
f_\varepsilon'=\tau+\varepsilon v',
\qquad
f_\varepsilon''=-C+\varepsilon v'',
\]
and expanding the regular expression \(-\frac12|\tau_{f_\varepsilon}'|^2\) gives the displayed fourth-order diagonal coefficient.

As a secondary consistency check, one may also use
\[
q=p-hA+o(h),
\qquad
z=hp-\frac{h^2}{2}A+o(h^2),
\]
together with
\[
D(h)=h^2+O(h^4),\qquad
\sin h=h+O(h^3),\qquad
\cos h=1-\frac{h^2}{2}+O(h^4).
\]
The desingularized representation above supplies the uniform remainder control needed to justify this check despite the apparent singular powers in the unreduced formula.

In the scalar case,
\[
K_4[c](s,s)
=
(c'(s))^2(c''(s))^2
-\frac12(c'(s))^4.
\]

\medskip\noindent\textbf{Continuity of \(Q_4\) and Fourier truncation.}

The fourth derivative furnished by Lemma~\ref{lem:C4} satisfies
\[
|Q_4[v]|
\le
C\|v\|_{C^2}^4
\]
on bounded \(C^2\)-sets.

If \(v^{(N)}\) is the Fourier truncation of a smooth field \(v\), then
\[
v^{(N)}\to v
\qquad\text{in }C^2,
\]
and therefore
\[
Q_4[v^{(N)}]\to Q_4[v].
\]

Thus every Fourier identity proved first for trigonometric polynomials extends to arbitrary smooth fields.

Alternatively, for smooth fields the Fourier coefficients decrease faster than any power while the fixed-sum matrix coefficients grow only polynomially (of order at most five), so the corresponding quartic Fourier series is absolutely summable.

\medskip\noindent\textbf{A convenient nonoptimal Sobolev extension.}

In one dimension,
\[
H^3(\mathbb T)\hookrightarrow C^2(\mathbb T).
\]
Therefore
\[
|Q_4[v]|
\le
C\|v\|_{C^2}^4
\le
C'\|v\|_{H^3}^4.
\]
Hence the quartic form extends continuously to \(H^3\).

This \(H^3\) statement is only a convenient nonoptimal extension. No claim is made concerning the minimal regularity or the optimal coercive topology.

\section*{Statements and Declarations}

\subsection*{Funding}

This work was supported by the Japan Society for the Promotion of Science (JSPS) KAKENHI Grant Number JP23KJ2232.

\subsection*{Competing interests}

The author has no relevant financial or non-financial interests to disclose.

\subsection*{Data availability}

No datasets were generated or analysed during the current study.

\subsection*{AI use declaration}

Generative AI tools, including ChatGPT (OpenAI), were used to assist with additional checks of calculations and logical consistency, literature searches, and the editing and organization of the manuscript. The research questions, mathematical arguments, calculations, interpretations, and conclusions were developed and verified by the author. All AI-assisted outputs used in the manuscript were independently checked by the author, who takes full responsibility for the content.

\printbibliography

\end{document}